\newtheorem{theorem}{Theorem}
\newtheorem{lemma}[theorem]{Lemma}
\theoremstyle{definition}
\newtheorem{definition}[theorem]{Definition}
\newcommand{\Jarnik}{Jarn\'{\i}k}
\newcommand{\N}{{\mathbb{N}}}
\newcommand{\words}{{\mathbb W}}
\newcommand{\E}{{\mathcal{E}}}
\newcommand{\F}{{\mathcal{F}}}
\newcommand{\G}{{\mathcal{G}}}
\newcommand{\alphabar}{\overline{\alpha}}
\newcounter{subsubparagraph}[subparagraph]
\renewcommand\thesubsubparagraph{%
  \thesubparagraph.\@arabic\c@subsubparagraph}
\newcommand\subsubparagraph{%
  \@startsection{subsubparagraph}    
    {6}                              
    {\parindent}                     
    {3.25ex \@plus 1ex \@minus .2ex} 
    {-1em}                           
    {\normalfont\normalsize\itshape}}
\newcommand\l@subsubparagraph{\@dottedtocline{6}{10em}{5em}}
\newcommand{\subsubparagraphmark}[1]{}
\begin{document}

\title{Irrationality Exponent, Hausdorff Dimension and Effectivization}
\author{Ver\'onica Becher\thanks{Becher’s research is supported by the University of Buenos Aires and CONICET. Becher is member of the Laboratoire International Associ\'e INFINIS, CONICET/Universidad de Buenos Aires-CNRS/Universit\'e Paris Diderot.}, Jan Reimann\thanks{Reimann was partially supported by NSF grant DMS-1201263.} and Theodore A.\ Slaman\thanks{Slaman was partially supported by NSF grant DMS-1600441.}}
  
\maketitle

%

\begin{abstract}
We generalize the classical theorem by \Jarnik\ and Besicovitch on the irrationality exponents of
real numbers and Hausdorff dimension and show that the two notions are independent.
For any real number $a$ greater than or equal to $2$ and
 any non-negative real  $b$ be less than or equal to $2/a$,
we show that there is a Cantor-like
set with Hausdorff dimension equal to $b$ such that, with respect to its uniform measure, almost
all real numbers have irrationality exponent equal to~$a$.  
We give an analogous result relating
the irrationality exponent and the effective Hausdorff dimension of individual real numbers.  
We prove that there is a Cantor-like set such that, with respect to its uniform measure, almost all elements in the
set have effective Hausdorff dimension equal to~$b$ and irrationality exponent equal to~$a$. 
In each case, we obtain the desired set as a distinguished path in a tree of Cantor sets.

\medskip
\textbf{Keywords:} Diophantine approximation, Cantor Sets, Effective Hausdorff Dimension

\textbf{Mathematics Subject Classification (2010):} 11J82,11J83,03D32
\end{abstract}

The irrationality exponent~$a$ of a real number $x$ reflects how well $x$ can be approximated by
rational numbers.  Precisely, it is the supremum of the set of real numbers~$z$ for which the
inequality
\[
0< \left| x- \frac{p}{q}\right| < \frac{1}{q^z}
\]
is satisfied by an infinite number of integer pairs $(p, q)$ with $q > 0$.  Rational numbers have
irrationality exponent equal to~$1$.  It follows from the fundamental work by~\citet{Khintchine:1924a} (see also Chapter 1 of \citep{Bug2004} for a good overview) that almost all irrational numbers (with respect
to Lebesgue measure) have irrationality exponent equal to~$2$. 

On the other hand, it follows from the theory of continued fractions that for
every $a$ greater than~$2$
or equal to infinity, there is a real number $x$ with irrationality exponent equal to~$a$.

The sets of real numbers with irrationality exponent~$a$ become smaller as $a$ increases.  This is
made precise by calculating their dimensions.  For a set of real numbers $X$, a non-negative real
numbers $s$, and a real number $\delta>0$, let
\[
H^s_\delta(X)=
\inf\left\{\sum_{j\geq 1} d_{j}^s:
\text{there  is a cover of } 
X\text{ by balls with diameter }(d_j < \delta: j\geq 1) \right\}.
\]
Note that $H^s_\delta(X)$ cannot decrease as $\delta$ goes to zero. The $s$-dimensional Hausdorff measure of $X$ is given as
\[
\lim_{\delta \to 0} H^s_\delta(X).
\]
The Hausdorff dimension of a set $X$ is the infimum of the set of non-negative reals $s$ such that
the $s$-dimensional Hausdorff measure of $X$ is~zero.  

\citet{Jarnik:1929a} and independently \citet{Bes1934} showed that the Hausdorff dimension of the set of real numbers $x$ such that $x$ has irrationality exponent greater than or equal to $a$ is $2/a$.  
\citet{Guting:1963a} proved that the Hausdorff dimension of the set of reals $x$ such that $x$ has irrationality exponent (exactly) equal to $a$ is also $2/a$. Later, \citet{Beresnevich:2001a} 
gave a new proof of Güting's result and also established that the $2/a$-dimensional
 Hausdorff measure of the set of reals $x$ with irrationality exponent $a$ is infinite. 
\citet{Bugeaud:2003a} extended these results to general approximation order functions 
in the sense of \citet{Khintchine:1924a}.

The results outlined above suggest that there is a strong tie between Hausdorff dimension and irrationality exponent, in the way irrationality exponents are metrically stratified in terms Hausdorff dimension. 
This tie becomes even more evident when replacing Hausdorff dimension 
by a \emph{pointwise} counterpart, known as \emph{effective} dimension, 
which we briefly describe.

The theory of computability defines the effective versions of the classical notions.  A~computable
function from non-negative integers to non-negative integers is one which can be effectively
calculated by some algorithm.  The definition extends to functions from one countable set to
another, by fixing enumerations of those sets.  A real number $x$ is computable if there is a
computable sequence of rational numbers $(r_j)_{j\geq 0}$ such that $|x -r_j| < 2^{-j}$
for~$j \ge 0$.  A set is computably enumerable if it is the range of a computable function with
domain the set of non-negative integers.

\citet*{CaiHar1994} considered effectively presented properties which are related to Hausdorff
dimension.  \citet*{Lut2000} formulated a definition of effective Hausdorff dimension for individual
sequences in terms of computable martingales.  \citet*{ReiSte05} reformulated the notion in terms of
computably enumerable covers, as follows.  Let $\words$ be the set of finite binary sequences
(sequences of $0$s and~$1$s), and we write $\N$ for the set of non-negative integers.  A set $X$ of real numbers has effective $s$-dimensional Hausdorff
measure zero, if there exists a computably enumerable set $C\subseteq \N\times \words$ such that for
every~$n\in \N$,
$C_n=\{w\in \words : (n,w)\in C \}$
satisfies that for every $x$ in $X$ there is a length $\ell$ such that the sequence of first $\ell$
digits in the base-2 expansion of $x$ is in $C_n$, and
\[
\sum_{w\in C_n}2^{-\text{\it length}(w)s}< 2^{-n}.  
\]
The effective Hausdorff dimension of a set $X$ of real numbers  is the infimum of the set of non-negative
reals $s$ such that the effective $s$-dimensional Hausdorff measure of $X$ is~zero.  The effective
Hausdorff dimension of an individual real number~$x$ is the effective Hausdorff dimension of the
singleton set $\{x\}$.  Note that the effective Hausdorff dimension of~$x$ can be greater than~$0$:
$x$~has effective Hausdorff dimension greater than or equal to~$s$ if for all~$t<s$, $x$~avoids
every effectively presented countable intersection of open sets (namely, every effective $G_\delta$
set) of $t$-dimensional Hausdorff measure zero.  The effective notion reflects the classical one in
that the set $\{x: \text{$x$ has effective Hausdorff dimension equal to } t \}$ has Hausdorff
dimension~$t$.

The notion of effective Hausdorff dimension can also be defined in terms of computable
approximation.  Intuitively, the Kolmogorov complexity of a finite sequence is the length of the
shortest computer program that outputs that sequence.  Precisely, consider a computable function $h$
from finite binary sequences to finite binary sequences such that the domain of $h$ is an antichain.
Define the $h$-complexity of~$\tau$ to be the length of the shortest $\sigma$ such that
$h(\sigma)=\tau$.  There is a universal computable function $u$ with the property that for every
such $h$ there is a constant $c$ such that for all $\tau$, the $h$-complexity of $\tau$ is less than
the $u$-complexity of $\tau$ plus~$c$.  Fix a universal $u$ and define the prefix-free Kolmogorov
complexity of $\tau$ to be its $u$-complexity.  In these terms, the effective Hausdorff dimension of a
real number $x$ is the infimum of the set of rationals $t$ such that there is a $c$ for which there are
infinitely many $\ell$ such that the prefix-free Kolmogorov complexity of the first $\ell$ digits in
the base-2 expansion of $x$ is less than $t\cdot\ell-c$. See \citet{Dow2010} for a thorough presentation.

Effective Hausdorff dimension was introduced by \citet{Lut2000} to add computability to the notion of
Hausdorff dimension, in the same way that the theory of algorithmic randomness adds computability to
Lebesgue measure.  But, we could also view the effective Hausdorff dimension of a real number $x$ as
a counterpart 
of its irrationality exponent.  Where the irrationality exponent of $x$ reflects
how well it can be approximated by rational numbers, the effective Hausdorff dimension of a real $x$
reflects how well it can be approximated by computable numbers.  The connection is more than an
analogy. 

Except for rational numbers all real numbers have irrationality  exponent greater than or equal  to~$2$.
This means that for each irrational number  $x$, the supremum of the set 
$\{ z: \text{ there are infinitely many rationals } p/q \text { such that  } |x - p/q| < 1/q^z\}$ is greater than or equal  to $2$.
On the other hand, most real numbers have effective Hausdorff dimension~$1$ and are algorithmically random,
which means that the initial segments of their expansions  can not be described by  concise algorithms.
Thus, for any such $x$, all rationals $p/q$ provide at most the first $2\log(q)$ digits of 
the base-$2$ expansion of~$x$  (take $p$ and $q$ integers, such that $0< p < q$, 
and describe each of them with $\log q$ digits.). 
Consequently, for a rational $p/q$  is impossible that  $|x - p/q| $ be much less than $ 1/q^2$.
It follows that  for most real numbers the irrationality exponent is just equal to~$2$.
In case $x$ is  a  Liouville number, its irrationality exponent is infinite, so
 for every $n$ there is a rational $p/q$  such that   $|x - p/q| < 1/q^n$. 
Thus,  $2\log(q)$ digits  can describe the first $n\log(q)$ bits of $x$. 
Therefore, each Liouville number has effective dimension $0$~\citep{Sta02}.
\citet{Calude:2013a} generalized this argument to show that if $x$ has irrationality exponent~$a$, then the effective Hausdorff dimension of~$x$ is less than or equal to~$2/a$.  

The precise metric stratification of irrationality exponents in terms of Hausdorff dimension has therefore an exact effective counterpart. In this note we show that the two concepts, irrationality exponent and (effective) Hausdorff dimension, are nevertheless independent. More precisely, we prove the following results. 

\begin{theorem}\label{1}
  Let $a$ be a real number greater than or equal to~$2$.  For every real number $b\in[0,2/a]$ there
  is a Cantor-like set~$E$ with Hausdorff dimension equal to~$b$ such that, 
  for the uniform measure on $E$,
  almost all real numbers have irrationality exponent equal to~$a$.
\end{theorem}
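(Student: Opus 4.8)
The plan is to produce, for each admissible pair $(a,b)$, a single Cantor-like set $E\subseteq[0,1]$ together with its uniform measure $\mu$ so that $\dim_H E=b$ and every $x\in E$ has irrationality exponent exactly $a$; the stated "almost all'' then follows a fortiori. I would build $E$ inside the continued fraction coding, using the classical facts that for $x=[0;a_1,a_2,\dots]$ with convergents $p_n/q_n$ one has $|x-p_n/q_n|\asymp 1/(a_{n+1}q_n^2)$, that the convergents realise all good rational approximations, and hence that the irrationality exponent of $x$ equals $2+\limsup_n(\log a_{n+1})/(\log q_n)$. Thus the exponent is completely governed by the sizes of the partial quotients, and the problem becomes: choose a set of admissible partial-quotient sequences whose cylinder set has Hausdorff dimension $b$ and for which this $\limsup$ is identically $a-2$.

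Concretely, I would split $\N$ into consecutive long blocks and, inside each block, designate all positions as \emph{generic} except the last one, call it $N_k$. At generic positions $n$ I constrain $a_n$ to a bounded or slowly growing set chosen so that the resulting continued fraction Cantor set has Hausdorff dimension $g:=ab/2$ (for $g<1/2$ one lets $a_n$ run through a window $[c_n,\,c_n+c_n^{2g}]$; for $g\in[1/2,1)$ one uses bounded partial quotients with a large bound, interpolating to hit $g$ exactly); such positions contribute $(\log a_{n+1})/(\log q_n)\to 0$. At the special position $N_k$ I let $a_{N_k}$ range over a subset of size $\asymp q_{N_k-1}^{\,b(a-2)}$ of the integers in $[\tfrac12 q_{N_k-1}^{\,a-2},\,q_{N_k-1}^{\,a-2}]$; then $(\log a_{N_k})/(\log q_{N_k-1})\to a-2$ while staying $\le a-2$, so the convergent $p_{N_k-1}/q_{N_k-1}$ witnesses exponent $\to a$ and never exceeds $a$. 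Hence every $x$ in the resulting set $E$ has irrationality exponent exactly $a$, and $\mu$ is the uniform measure on $E$. The degenerate endpoints are treated apart: $b=0$ (take $g=0$, retaining only a trace of branching so that $E$ is still perfect) and $b=2/a$ (where formally $g=1$; here I would instead invoke the Jarn\'ik--Besicovitch and G\"uting constructions of a dimension-$2/a$ subset of the set of reals with exponent $a$).

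For the Hausdorff dimension I would compute $\liminf_{r\to0}(\log\mu(B(x,r)))/(\log r)$ for $x\in E$. Over a generic block the local dimension equilibrates to $g$, always moving towards $g$. A special position is a one-scale contraction: the cylinder $[a_1,\dots,a_{N_k-1}]$ of length $\asymp q_{N_k-1}^{-2}$ is replaced by $\asymp q_{N_k-1}^{\,b(a-2)}$ sub-cylinders of length $\asymp q_{N_k-1}^{-2(a-1)}$, all clustered near an endpoint of the parent inside an interval of length $\asymp q_{N_k-1}^{-a}$. At scale $q_{N_k-1}^{-a}$ this forces $\mu(B(x,r))\asymp(q_{N_k-1}^{-2})^{g}$, a ratio of $2g/a$, while at the sub-cylinder scale $q_{N_k-1}^{-2(a-1)}$ the ratio is $(2g+b(a-2))/(2(a-1))$. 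With $g=ab/2$ both of these equal $b$, and the generic dimension $g=ab/2\ge b$ only pushes the ratio upward in between; hence $\liminf_{r\to0}(\log\mu(B(x,r)))/(\log r)\equiv b$, giving $\dim_H E\ge b$ by the mass distribution principle. For the reverse inequality I would cover $E$, at each special level, by the $\asymp q_{N_k-1}^{\,2g}$ clusters of diameter $\asymp q_{N_k-1}^{-a}$: then $H^s_\delta(E)\lesssim q_{N_k-1}^{\,2g-as}\to0$ for every $s>b=2g/a$, so $\dim_H E\le b$.

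The step I expect to be the main obstacle is making the dimension land \emph{exactly} on $b$. The special positions cannot be avoided — they are what carries the irrationality exponent up to $a$ — yet each one imposes a large contraction by the power $a-1$, which necessarily depresses the local dimension at that single scale; the whole construction (the generic dimension $g$, the branching $\asymp q^{\,b(a-2)}$ at the special positions, and the requirement that the blocks be long enough for the local dimension to re-equilibrate) must be tuned so that this unavoidable depression is exactly $b$ and is not overshot. One must also verify the finer monotonicity claims — that inside a generic block the ratio $(\log\mu(B(x,r)))/(\log r)$ rises from $b$ up to $g$ without dipping below $b$, and that the clustered geometry at a special position contributes no ratio smaller than $2g/a$ — and dispose of the two endpoint values of $b$ by the separate arguments indicated above.
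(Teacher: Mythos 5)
Your proposal is essentially correct in outline, but it takes a genuinely different route from the paper. You work inside the continued fraction coding and use the identity (exponent) $=2+\limsup_n(\log a_{n+1})/(\log q_n)$ to cap the exponent pointwise: generic blocks of controlled partial quotients carry dimension $g=ab/2$, and sparse ``special'' positions with $a_{N_k}\asymp q_{N_k-1}^{a-2}$ push the exponent up to, but never beyond, $a$; your two critical-scale computations (ratio $2g/a$ at scale $q^{-a}$ and $(2g+b(a-2))/(2(a-1))$ at scale $q^{-2(a-1)}$, both equal to $b$ when $g=ab/2$) are the right design equations, and the covering by clusters gives the matching upper bound. The paper instead stays with \Jarnik's interval construction: it builds finitely-branching trees of thinned \Jarnik\ fractals ($\E$, $\F$, $\G$ according to whether $b\le 1/a$, $1/a\le b\le 2/a$, or $a=2$), proves a mass-distribution estimate $\mu(I)<|I|^{b_k}$ with $b_k\uparrow b$ (Lemmas~\ref{6}, \ref{9}, \ref{12}) to pin the dimension, and then selects a single branch by a majority/measure-counting argument against the sets $B(d_s,\infty,a+2^{-s})$ so that the exceptional set of exponent $>a$ is $\mu$-null. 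What each buys: your route yields the stronger conclusion that \emph{every} point of $E$ has exponent exactly $a$ (the paper only gets exponent $\ge a$ everywhere and $=a$ almost everywhere), and it avoids the tree-and-selection step entirely; the paper's route uses only elementary interval geometry plus the Mass Distribution Principle, handles $b=2/a$ inside the same machinery rather than by appeal to \Jarnik/G\"uting, and—crucially for its purposes—its tree of fractals with computable parameters is what carries over to the effective version (Theorem~\ref{2}), where your approach would need additional work. Two points in your sketch deserve emphasis, since they are where the actual labor lies and they mirror the paper's Lemma~\ref{6}: the retained subset of values of $a_{N_k}$ must be (essentially) uniformly spread in the window $[\tfrac12 q^{a-2},q^{a-2}]$—for $a>2$ and $b<1$ a clustered choice makes the local ratio dip to $ab/(2(a-1)-b(a-2))<b$ at the intermediate scale, exactly the failure mode the paper avoids by taking centers in a fixed residue class mod $q_k$—and the block lengths must grow fast enough (an adaptive condition like $m_{k+1}\ge f(k,m_k)$) so that the under-equilibration of the local exponent after each special contraction vanishes in the limit; with those two provisos your argument goes through.
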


\begin{theorem}\label{2}
  Let $a$ and $b$ be real numbers such that $a\geq 2$ and $b\in[0,2/a]$.  There is a Cantor-like set
  $E$ such that, for the uniform measure on $E$, almost all real numbers in  $E$ have
  irrationality exponent equal to $a$ and effective Hausdorff dimension equal to $b$.
\end{theorem}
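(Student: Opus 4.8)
The plan is to run the interval construction behind Theorem~\ref{1} effectively enough to also control, for a $\mu$-typical point $x$ of $E$, the growth rate of the prefix-free Kolmogorov complexity $K(x\upharpoonright\ell)$ of the first $\ell$ digits of the base-$2$ expansion of $x$. Since the effective Hausdorff dimension of $x$ equals $\liminf_{\ell}K(x\upharpoonright\ell)/\ell$, it suffices to build a Cantor-like $E$ with uniform measure $\mu$ so that $\mu$-almost every $x\in E$ has irrationality exponent $a$ (this is Theorem~\ref{1}) and, for every positive rational $\varepsilon$, satisfies $K(x\upharpoonright\ell)\ge(b-\varepsilon)\ell$ for all large $\ell$ and $K(x\upharpoonright\ell)\le(b+\varepsilon)\ell$ for infinitely many $\ell$.

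I would take $E=\bigcap_k E_k$ with $E_k$ a union of $N_k$ intervals, the $i$-th of length about $q^{-a}$ and centered at a rational $p/q$ with $q\approx Q_k$, with the generation-$(k{+}1)$ intervals nested well inside those of generation $k$, and with $Q_k$ increasing fast: membership in all $E_k$ forces irrationality exponent at least $a$, and for a suitable growth rate of $Q_k$ (a fixed power at each step, as in Theorem~\ref{1}) it forces irrationality exponent at most $a$. Writing $\ell_k\approx a\log_2 Q_k$ for the scale at which a generation-$k$ interval is a cylinder, one takes $N_k\approx 2^{b\ell_k}$ and refines $b$-dimensionally inside each interval, so that $\mu([w])\approx 2^{-b|w|}$ at every scale and $E$ has Hausdorff dimension $b$; that $N_k$ can be this large, and only this large, is exactly the constraint $b\in[0,2/a]$, since at scale $\ell$ there are about $2^{2\ell/a}$ eligible pairs $(p,q)$. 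The new stipulation is that the eligible pairs of each generation be taken, canonically, as an initial segment of a fixed computable enumeration (of pairs hanging off the intervals of the previous generation), which makes the nesting and the effective bookkeeping below automatic.

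For the lower bound $K(x\upharpoonright\ell)\ge(b-\varepsilon)\ell$ I would use a counting argument insensitive to whether $\mu$ is computable: for fixed rational $\varepsilon>0$ the set $\{w\in\words:|w|=\ell,\ K(w)<(b-\varepsilon)\ell\}$ is computably enumerable with at most $2^{(b-\varepsilon)\ell}$ members, while every length-$\ell$ cylinder in the support of $\mu$ has measure at most $2^{-(b-\varepsilon/2)\ell}$ for large $\ell$; hence the reals with a length-$\ell$ prefix in that set have $\mu$-measure at most $2^{-\varepsilon\ell/2}$, which is summable, and Borel--Cantelli applies. For the upper bound $K(x\upharpoonright\ell)\le(b+\varepsilon)\ell$ at infinitely many $\ell$, the point is that at the Diophantine scales the initial segment of $x$ is essentially determined by the witnessing rational, and that rational lies in a computably enumerable family of size about $2^{b\ell}$: given rational $\varepsilon>0$ choose rationals $A\in\bigl(ab/(b+\varepsilon),\,a\bigr)$ and $c\in\bigl(ab,\,A(b+\varepsilon)\bigr)$, set $\ell^{*}_k=\lceil A\log_2 Q_k\rceil$ (a computable sequence), and cover the footprint of $E_k$ at scale $\ell^{*}_k$ by the scale-$\ell^{*}_k$ cylinders meeting one of the first $\lceil Q_k^{\,c}\rceil$ eligible pairs of generation $k$; this family contains all $N_k\le\lceil Q_k^{\,c}\rceil$ true generation-$k$ intervals, is computably enumerable uniformly in $k$ (it uses only the rationals $A,c$ and the computable quantity $\log_2 Q_k$), and has size $O(Q_k^{\,c})=2^{(c/A+o(1))\ell^{*}_k}$ with $c/A<b+\varepsilon$. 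So every $x\in E$ has $K(x\upharpoonright\ell^{*}_k)\le(b+\varepsilon)\ell^{*}_k$ for all large $k$, hence $\dim x\le b$. This is also exactly where one pushes below the bound $\dim x\le 2/a$ that \citet{Calude:2013a} derive from the irrationality exponent alone: that bound corresponds to letting the eligible family keep its full size $2^{2\ell/a}$.

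The main obstacle is satisfying all the demands simultaneously with one nested family of interval systems: the fast growth of $Q_k$ forced by ``irrationality exponent $=a$'', the scales $\ell^{*}_k$ used for the complexity upper bound, and the requirement that $\mu$ not concentrate at intermediate scales (so that the complexity lower bound is $b$ rather than something smaller) all have to be reconciled, and one must verify that the rule placing generation-$(k{+}1)$ rationals in the central part of a generation-$k$ interval is compatible with the ``initial segment of a computable enumeration'' rule and with the intended value of $N_k$. Note that $a$ and $b$ themselves need not be computable: only rational approximations to $a$ and $ab$ (from the correct side) enter the computably enumerable covers of the upper-bound argument, while the sole genuinely non-computable datum, the threshold $N_k=\lfloor 2^{b\ell_k}\rfloor$, is used only to define $E$ and never to certify a complexity bound. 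The degenerate cases $b=2/a$ (no thinning is needed and $\dim x\le b$ is immediate from \citet{Calude:2013a}) and $a=2$ (the Diophantine requirement is the generic one and a standard $b$-dimensional construction suffices) are treated separately. As in Theorem~\ref{1}, the whole construction is presented as a distinguished path through a tree of Cantor sets, the path being chosen to meet the Diophantine requirements together with the countably many complexity requirements packaged above.
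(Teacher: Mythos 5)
The gap is in your complexity upper bound, which is the heart of Theorem~\ref{2}. Your cover at scale $\ell^{*}_k$ consists of the cylinders meeting one of the first $\lceil Q_k^{\,c}\rceil$ ``eligible pairs of generation $k$'', and you claim it is computably enumerable uniformly in $k$ using only the rationals $A,c$ and ``the computable quantity $\log_2 Q_k$''. Neither part of that claim is justified. By your own convention the eligible pairs of generation $k$ are an initial segment of an enumeration of pairs hanging off the retained intervals of generation $k-1$; to enumerate them one must know those intervals, and they depend on all the genuinely non-computable data of the construction: the thinning cutoffs $N_j\approx 2^{b\ell_j}$ for $j<k$ (so it is not true that $N_k$ ``is used only to define $E$ and never to certify a complexity bound''), the interval lengths of order $q^{-a}$ with $a$ an arbitrary real, and, crucially, the branch selections that control the irrationality exponent, which in the proof of Theorem~\ref{1} are made by comparing $\mu_J$-measures of sets $B(d_s,\infty,\cdot)$ and are not effective. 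For the same reasons $Q_k$ (equivalently $\vec m$) is not a computable sequence: its required growth depends on approximations to $a$ and $b$ and on the stage-by-stage choices. Without uniform c.e.-ness, the cardinality bound on your family gives no bound on Kolmogorov complexity; and if you enlarge the family to something genuinely computable (all pairs with denominator near $Q_k$), its size reverts to roughly $2^{2\ell/a}$ and you only recover the $2/a$ bound you are trying to beat.

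The paper closes exactly this hole with a device your proposal lacks: it never makes the construction computable, but at each stage chooses the next scale $m_{\ell_{s+1}}$ so large that a binary string of length $\log(m_{\ell_{s+1}})a(b-\beta_{s+1})$ suffices to write down, verbatim, all parameters and the first $s$ stages of the construction; an interval of $E_{\ell_{s+1}}$ is then described by this history string, plus an index among the at most $m_{\ell_{s+1}}/q_{\ell_{s+1}}\le 2^{\log(m_{\ell_{s+1}})a_{\ell_{s+1}}(b_{\ell_{s+1}}+\epsilon_{s+1})}$ intervals (the count coming from the bound on $1/q_k$ in Lemma~\ref{6}), plus one bit, totalling at most $b$ times the number of binary digits thereby determined. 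Relatedly, the paper replaces $a$ and $b$ inside the recursion by rational sequences $\alpha_s\nearrow a$, $\overline{\alpha}_s\searrow a$, $\beta_s\nearrow b$ (settling for Diophantine witnesses to every $a^{*}<a$), precisely so that each finite stage is finitely describable. Your lower-bound argument (counting strings of low complexity against the mass-distribution estimate $\mu(I)<|I|^{b_k}$ and applying Borel--Cantelli) is fine and essentially equivalent to the paper's appeal to the Mass Distribution Principle, and your overall architecture (a distinguished path through a tree of Cantor sets) matches the paper's; but without the history-encoding step, or some substitute making the generation-$k$ data available at low description cost, the bound ``effective dimension $\le b$'' is not established.
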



A classic result due to \citet{Besicovitch:1952a} ensures that, for any real number $s \geq 0$, 
any closed subset of the real number  of infinite $s$-dimensional Hausdorff measure has a subset of finite, non-zero $s$-dimensional Hausdorff measure. 
It follows that   the set of reals of irrationality exponent greater than or equal to 
$a$ has a subset of Hausdorff dimension $b$, for any $0 \leq b \leq 2/a$.
However, the proof of this theorem  \citep[which uses binary net measures, see for example][]{Fal1986}
does not preserve the Cantor set structure of \Jarnik's fractal, in particular it does not provide a nice measure concentrated on a set of reals of irrationality exponent~$a$. 
In fact, as shown by \citet{Kjos-Hanssen-Reimann:2010a}, finding a Cantor subset of finite, non-zero Hausdorff measure is generally very hard.

As we will see, it takes some effort to ensure the persistence of the Cantor set structure (and with it a nice measure) when passing 
to smaller dimensions while preserving irrationality exponents. Furthermore, the case of effective dimension presents additional 
difficulties since we have to replace real numbers (which may be non-computable), by rational approximations to them.

\subsubsection*{Acknowledgments}
The authors thank two anonymous referees for helpful comments and remarks.

\section{\Jarnik's Fractal and Its Variations}

Let $a$ be a real number greater than~2.  As mentioned earlier, \citet{Jar1928} and \citet{Bes1934}
independently established that the set of real numbers with irrationality exponent greater than or
equal to~$a$ has Hausdorff dimension~$2/a$.  \Jarnik\ exhibited a Cantor-like set $E^J(a)$ such that
every element of $E^J(a)$ has irrationality exponent greater than or equal to~$a$ and such that, for
every $d$ greater than~$a$, the set of real numbers with irrationality exponent~$d$ is null for the uniform measure~$\mu_J$
on $E^J(a)$.  The latter condition followed by application of the mass distribution principle
on~$E^J(a)$:

\begin{lemma}[Mass Distribution Principle, cf.~\citealp{Fal2003}]\label{3}
  Let $\mu$ be a measure on $E,$ a subset of the real numbers, and let $a$ be a positive real number.  If $\mu(E) >0$
  and there are positive constants $c$ and $\delta$ such that for every interval $I$ with
  $|I|<\delta$, $\mu(I) < c |I|^a$, then the Hausdorff dimension of $E$ is greater than or equal
  to~$a$.
\end{lemma}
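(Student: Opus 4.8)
The plan is to bound the $a$-dimensional Hausdorff measure of $E$ from below by a positive constant, which immediately forces the Hausdorff dimension of $E$ to be at least $a$. First I would fix $\delta>0$ and let $\{U_j : j\geq 1\}$ be an arbitrary countable cover of $E$ by sets of diameter $d_j = |U_j| < \delta$. Replacing each $U_j$ by an open interval $I_j$ with the same diameter (or at most twice the diameter, which only costs a harmless constant factor $2^a$) we may assume the cover consists of intervals with $|I_j| < \delta$. Since the $I_j$ cover $E$, monotonicity and countable subadditivity of the measure $\mu$ give
\[
0 < \mu(E) \leq \mu\!\left(\bigcup_{j\geq 1} I_j\right) \leq \sum_{j\geq 1} \mu(I_j).
\]

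Next I would invoke the hypothesis: for each $j$, since $|I_j| < \delta$, we have $\mu(I_j) < c\,|I_j|^a = c\, d_j^a$. Substituting into the previous inequality yields
\[
0 < \mu(E) \leq c \sum_{j\geq 1} d_j^a,
\]
so that $\sum_{j\geq 1} d_j^a > \mu(E)/c$. Since this holds for every admissible cover, taking the infimum over all covers of $E$ by sets of diameter less than $\delta$ gives $H^a_\delta(E) \geq \mu(E)/c > 0$. This bound is uniform in $\delta$, so letting $\delta \to 0$ we obtain that the $a$-dimensional Hausdorff measure of $E$ is at least $\mu(E)/c$, which is strictly positive.

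Finally, once the $a$-dimensional Hausdorff measure of $E$ is known to be positive (indeed nonzero), the definition of Hausdorff dimension forces every $s < a$ to fail to have $s$-dimensional measure zero — more precisely, if the $a$-measure were positive then for $s<a$ the $s$-measure is infinite — so the infimum of the set of $s$ with vanishing $s$-dimensional measure is at least $a$; that is, the Hausdorff dimension of $E$ is $\geq a$, as claimed. There is no serious obstacle here: the only point requiring a little care is the reduction from an arbitrary cover by sets to a cover by intervals of comparable diameter (handled by the factor $2^a$, which can be absorbed into the constant $c$), and the standard fact that positivity of the $a$-dimensional measure is inherited as infinitude of the $s$-dimensional measure for $s<a$.
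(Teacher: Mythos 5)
Your proof is correct, and it is exactly the standard argument for the Mass Distribution Principle found in Falconer's book, which the paper cites for this lemma without reproducing a proof: bound any fine cover from below via subadditivity and the hypothesis $\mu(I)<c|I|^a$, conclude $H^a_\delta(E)\geq \mu(E)/c$ uniformly in $\delta$, and hence $\dim_H E\geq a$. Since the paper offers no independent proof, there is nothing further to compare.
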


For $b$ given so that $0\leq b<2/a$, we will define a subset $E$ of $E^J(a)$ with dimension~$b$.
$E$~will also be a Cantor-like set  and have its own uniform measure $\mu$.  
Using $\mu_J$ as a guide, we will ensure that for any $d$ greater than $a$
the set of real numbers with irrationality exponent~$d$ is null with respect to~$\mu$.  Further, we
shall arrange that $\mu$ has the mass distribution property for exponent~$b$.

We fix some notation to be used in the final step of the construction.  For a positive
integer $p$,
\[
G_{p}(a)=\left\{x\in \left(\frac{1}{p^{a}}, 1-\frac{1}{p^{a}}\right): \exists q\in\N,
  \left|\frac{q}{p} - x\right| \leq \frac{1}{p^a}\right\}.
\]
For $M$ a sufficiently large positive integer, and $p_1$ and $p_2$ primes such that
$M< p_1<p_2<2M$, the sets $G_{p_1}(a)$ and $G_{p_2}(a)$ are disjoint.  In fact, the distance between
any point in $G_{p_1}(a)$ and any point in $G_{p_2}(a)$ is greater than or equal to
\[
\frac{1}{4M^2}-\frac{2}{M^a} \geq \frac{1}{8 M^2}.
\]
For such $M,$ the set
\[
K_M(a)=\bigcup_{\substack{p \text{ prime}\\M<p< 2M}} G_p(a)
\]
is the disjoint union of the intervals composing the sets $G_p(a)$.  So $K_{M}(a)$ is made up of
intervals of length less than or equal to~$2/M^a$ which are separated by gaps of length at least~$1/(8M^2)$.

We obtain \Jarnik's fractal by choosing a sequence $\vec{m}=(m_i:i\geq 1)$, which is sufficiently
fast growing in a sense to be determined below. Each number $m_i$ will play the role of an $M$ as described in the previous paragraph. We let $E^J_0(\vec{m},a)=[0,1]$ and for
$k=1,2,\ldots$ let $E^J_k(\vec{m},a)$ be the union of those intervals of $K_{m_k}(a)$ that are
completely contained in $E^J_{k-1}(\vec{m},a)$.  By discarding a negligible number of intervals, we
arrange that all intervals from $E^J_{k-1}(\vec{m},a)$ are split into the same number of intervals
in $E^J_k(\vec{m},a)$.  Let $i_k$ be the number of intervals from $E^J_k(\vec{m},a)$ which are
contained in a single interval of $E^J_{k-1}(\vec{m},a)$.  Let
$E^J(\vec{m},a)=\cap_{k\geq 1}E^J_k(\vec{m},a)$.  We define a mass distribution $\mu_J(\vec{m},a)$
on $E^J(\vec{m},a)$ by assigning a mass of $1/(i_1 \times \ldots \times i_{k})$ to each of the
$i_1 \times \ldots \times i_k$ many $k$-level intervals in $E^J_k(\vec{m},a).$ Then
$\mu_J(\vec{m},a)$ has the mass distribution property for
exponent~$2/a$,~\citep[see][Chapter~10]{Fal2003}.

There are two ways by which $E^J(\vec{m},a)$ can be thinned to a subset of lower Hausdorff
dimension.  First, we could use fewer numbers between $m_k$ and $2m_k$ when we define
$E^J_k(\vec{m},a)$.  However, even if we choose only one denominator at each level, the resulting set
has dimension~$1/a$ \citep[see][Example~4.7]{Fal2003}.  To obtain a dimension smaller than $1/a$,
not only do we choose just one denominator at each level, but we also choose only the intervals centered
on a uniformly spaced subset of the rational numbers with that denominator.

There is a further variation on \Jarnik's construction and the above thinned version of it which
allows for approximating $a$ and~$b$. 
To express it we  introduce the following definition.

\begin{definition}
The sequences of real numbers   $\vec{a}$ and $\vec{b}$
are \emph{appropriate} when $\vec{a}=(a_k:k\in\N)$ is non-decreasing with $a_k \geq 1$ for all $k$ and limit $a$ greater than or
equal to~$2$ and $\vec{b}=(b_k:k\in\N)$ is strictly increasing with limit $b$ less than or equal
to~$2/a$ such that if $1/a<b$ then $1/a<b_1$.
\end{definition}

For appropriate $\vec{a}$ and $\vec{b}$, we can
modify \Jarnik's fractal to accommodate the specification of $a$ in the limit by substituting $a_k$
in place of $a$ in the definition of $E_k^J$.  That is, we let $E^J_k(\vec{m},\vec{a})$ be the
collection of intervals of $K_{m_k}(a_k)$ which are completely contained in
$E^J_{k-1}(\vec{m},\vec{a})$, adjusted by removing intervals so that every interval in
$E^J_{k-1}(\vec{m},\vec{a})$ contains the same number of intervals in $E^J_k(\vec{m},\vec{a})$.  By
construction, the intervals in $E_k^J(\vec{m},\vec{a})$ are of the form
$\left[\frac{q}{p}-\frac{1}{p^{a_k}},\frac{q}{p}+\frac{1}{p^{a_k}}\right].$ 
It follows that every real number in $E^J(\vec{m},\vec{a})$ has
irrationality exponent greater than or equal to~$a$.  Further, when the sequence $\vec{m}$ grows
sufficiently quickly, the uniform measure $\mu_J(\vec{m},\vec{a})$ on $E^J(\vec{m},\vec{a})$ has the
mass distribution property for exponent~$2/a$.  It follows that $\mu_J(\vec{m},\vec{a})$-almost
every real number has irrationality exponent exactly equal to~$a$.  Similarly, we can modify the way
that we thin $E^J(\vec{m},\vec{a})$ to reduce dimension from $2/a$ to $b$.  The construction is not
sensitive on this point and using $b_k$ to determine how to thin at step $k$ results in a fractal of
dimension~$b.$

\subsection{Irrationality exponent~$a$, Hausdorff Dimension $b$ and $0<b\leq 1/a$}

\begin{definition}[Family of fractals $\E(\vec{q},\vec{m},\vec{a})$ ]\label{5}
  Let $\vec{m}$ be an increasing sequence of positive integers; let $\vec{q}$ be a sequence of
  integers;
  let $\vec{h}$ be a sequence of integers such that for each $k,$ $h_k\in[0,q_k)$; and let $\vec{a}$
  be a non-decreasing sequence of real numbers greater than or equal to~$2$ with limit~$a$.
    \begin{itemize}
      \item Let $\E_{1}(\vec{h},\vec{q},\vec{m},\vec{a})$ be $[0,1]$.   
      \item Given $\E_{k-1}(\vec{h},\vec{q},\vec{m},\vec{a}),$ let
        $\E_{k}(\vec{h},\vec{q},\vec{m},\vec{a})$ be the collection of intervals in  $G_{m_k}(a_k)$
        which are completely contained in intervals from
        $\E_{k-1}(\vec{h},\vec{q},\vec{m},\vec{a})$ and which are of the form
        $\left[\frac{r}{m_k}-\frac{1}{m_k^{a_k}},\frac{r}{m_k}+\frac{1}{m_k^{a_k}}\right]$ 
        such that $r\equiv h_k\mod{q_k}$.  
        As usual, discard a negligible number of intervals so that each interval in
        $\E_{k-1}(\vec{h},\vec{q},\vec{m},\vec{a})$ has the same number of subintervals in
        $\E_{k}(\vec{h},\vec{q},\vec{m},\vec{a})$.  Further, ensure that this number of subintervals is
        independent of~$\vec{h}$.  
    \end{itemize}
    Let $\E(\vec{q},\vec{m},\vec{a})$ be the family of fractals obtained by considering all
    possible sequences~$\vec{h}$.
\end{definition}

By construction, if $\vec{h}$ and $\vec{g}$ have the same first $k$ values, then for all $j\leq k$,
$\E_{j}(\vec{h},\vec{q},\vec{m},\vec{a})$ is equal to $\E_{j}(\vec{g},\vec{q},\vec{m},\vec{a})$.
So, $\E(\vec{q},\vec{m},\vec{a})$ is actually a finitely-branching tree of fractals.

\begin{lemma}\label{6}
  Suppose that $\vec{a}$ and $\vec{b}$ are appropriate sequences of reals  with limits $a$ and $b$ such that
  $a\geq 2$ and $b\leq 1/a$.  There is a function $f$, computable from $\vec{a}$ and $\vec{b},$ such that for any
  sequence of integers $\vec{m}=(m_k:k\in\N)$ for which for all $k$, $m_{k+1}\geq f(k,m_k)$, there is a sequence
  of integers $\vec{q}$, such that for all $E\in \E(\vec{q},\vec{m},\vec{a})$ and for $\mu$ the uniform
  measure on $E$, the following conditions hold.
  \begin{itemize}
  \item For all $k$ greater than $2$ and all intervals $I$ such that 
    $|I|\leq 
   \frac{q_{k-1}}{m_{k-1}}-\frac{2}{m_{k-1}^a}$, 
    \[\mu(I)<|I|^{b_k}.
     \]
  \item For all integers $k$, 
   \[
   m_k^{a_k b_{k+1}-1}\leq \frac{1}{q_k}\leq m_k^{a_k b_{k+2}-1}.
   \]
    \end{itemize}
    Further, we can compute $q_k$ from $(a_1,\dots,a_k)$, $(b_1,\dots,b_{k+2})$ and
    $(m_1,\dots,m_k)$.
\end{lemma}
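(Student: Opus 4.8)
The plan is to fix the denominators at the small end of their admissible window: take $q_k=\lfloor m_k^{\,1-a_kb_{k+2}}\rfloor+1$. Since $a_kb_{k+2}\le ab\le 1$ this is a positive integer, and once $m_k$ is large enough (which $f$ will force) the strict inequality $b_{k+1}<b_{k+2}$ yields $m_k^{\,1-a_kb_{k+2}}<q_k\le m_k^{\,1-a_kb_{k+1}}$, i.e.\ the second bullet, and also legitimizes the residue restriction $r\equiv h_k\bmod q_k$ of Definition~\ref{5}. This $q_k$ is manifestly computable from $m_k,a_k,b_{k+2}$. Making $q_k$ small is exactly what lets the fractal branch as much as the constraints permit, hence makes the mass distribution $\mu$ spread out, which is what the first bullet requires.

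Next I would record the geometry. Write $\ell_k=2/m_k^{a_k}$ for the length of a level-$k$ interval and $s_k=q_k/m_k$ for the spacing of the eligible centres $r/m_k$. Provided $m_k$ is large enough that the boundary intervals discarded in Definition~\ref{5} are negligible, every level-$(k-1)$ interval contains a number $i_k$ of level-$k$ intervals with $\tfrac12\,\ell_{k-1}/s_k\le i_k\le\ell_{k-1}/s_k+1$, and $i_k$ does not depend on $\vec h$; hence all estimates below are uniform over $E\in\E(\vec q,\vec m,\vec a)$. Telescoping $N_{k-1}:=i_2\cdots i_{k-1}$ and using $q_j\le 2m_j^{\,1-a_jb_{j+2}}$, the contributions of the $\ell_{j-1}$'s and the $1/s_j$'s largely cancel and one is left with an estimate of the form
\[
N_{k-1}\ \ge\ 2^{-k}\,m_{k-1}^{\,a_{k-1}b_{k+1}}\prod_{j=2}^{k-2}m_j^{\,a_j(b_{j+2}-1)} .
\]
Because $\vec b$ is \emph{strictly} increasing, the exponent $a_{k-1}b_{k+1}$ here \emph{strictly} exceeds the target exponent $a_{k-1}b_k$; and since $\vec m$ is increasing and each $a_j(1-b_{j+2})$ is positive, the nuisance product is at least $m_{k-2}^{\,-a_{k-2}(k-3)}$. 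So $N_{k-1}>10\,m_{k-1}^{a_{k-1}b_k}$ holds as soon as $m_{k-1}^{\,a_{k-1}(b_{k+1}-b_k)}>10\cdot 2^{k}\,m_{k-2}^{\,a_{k-2}(k-3)}$, a condition on $m_{k-1}$ mentioning only $k$, $m_{k-2}$ and finitely many terms of $\vec a,\vec b$. Accordingly $f(k,m_k)$ is taken to be the maximum of finitely many explicit thresholds of this kind, forcing in addition the boundary-discard bound on $i_{k+1}$, the inclusion of $q_{k+1}$ in its window, the inequalities $s_{k+1}\le\tfrac12\ell_k$, $s_{k+2}\le\tfrac12\ell_{k+1}$, $\ell_{k+1}\le\tfrac14\ell_k$, and the lower bound on $N_{k+1}$ of the displayed type; each such threshold uses only $N_k\ge 1$ and $\prod_{j\le k}m_j^{c_j}\le m_k^{\,O(k)}$ (valid as $\vec m$ increases), so $f$ indeed depends only on $k,m_k$ and on $\vec a,\vec b$.

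The substance is the first bullet. Fix $k>2$ and $I$ with $|I|\le q_{k-1}/m_{k-1}-2/m_{k-1}^a$. Since both $|I|^{b_n}$ and $q_n/m_n-2/m_n^a$ decrease in $n$, I may assume $k$ is the largest index for which the hypothesis holds, so also $|I|>q_k/m_k-2/m_k^a\ (\ge\tfrac12 s_k)$. Split on $|I|$ versus $\ell_{k-1}$. If $\ell_{k-1}\le|I|$ (and necessarily $|I|<s_{k-1}$), then $I$ meets at most two level-$(k-1)$ intervals, so $\mu(I)\le 2/N_{k-1}$, while $|I|^{b_k}\ge\ell_{k-1}^{b_k}\ge m_{k-1}^{-a_{k-1}b_k}$; hence $\mu(I)<|I|^{b_k}$ follows from $N_{k-1}>2\,m_{k-1}^{a_{k-1}b_k}$. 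If $|I|<\ell_{k-1}$, then (since $s_{k-1}\ge 2\ell_{k-1}$) $I$ meets a single level-$(k-1)$ interval $J$, inside which it meets at most $|I|/s_k+2\le 5|I|/s_k$ level-$k$ intervals, so
\[
\mu(I)\ \le\ \frac{5|I|/s_k}{N_k}\ =\ \frac{5|I|}{s_k\,i_k\,N_{k-1}}\ \le\ \frac{10\,|I|}{\ell_{k-1}\,N_{k-1}}\ <\ \frac{10\,\ell_{k-1}^{-b_k}}{N_{k-1}}\,|I|^{b_k}\ \le\ \frac{10\,m_{k-1}^{a_{k-1}b_k}}{N_{k-1}}\,|I|^{b_k},
\]
the third step using $s_ki_k\ge\tfrac12\ell_{k-1}$ and the fourth using $|I|^{1-b_k}<\ell_{k-1}^{1-b_k}$ (valid since $0<1-b_k<1$). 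Both cases thus reduce to the single inequality $N_{k-1}>10\,m_{k-1}^{a_{k-1}b_k}$, already secured above, and the tight-$k$ reduction then covers every interval $I$ occurring in the statement.

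I expect the crux — and the main obstacle — to be twofold. First, one has to see that $q_k$ must sit at the \emph{lower} end of its window, so that $N_{k-1}$ grows like $m_{k-1}^{a_{k-1}b_{k+1}}$ rather than $m_{k-1}^{a_{k-1}b_k}$: it is exactly this gap in the exponent, available only because $\vec b$ is strictly increasing, that provides the room for $\mu(I)<|I|^{b_k}$. Second, one must verify that all the ``$m_{k+1}$ large enough'' requirements can be merged into a single $f(k,m_k)$, which succeeds only because, wherever the earlier terms $m_1,\dots,m_k$ would otherwise intrude in a threshold, monotonicity of $\vec m$ and the trivial bound $N_k\ge 1$ reduce their effect to a power of $m_k$ with a $k$-dependent exponent.
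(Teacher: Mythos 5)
Your argument is correct and establishes both bullets, but it handles the central estimate by a different route than the paper. The paper follows Falconer's presentation of \Jarnik's theorem: for $g_k\le|S|<g_{k-1}$ it bounds the number of level-$k$ intervals meeting $S$ by $\min(4|S|m_k/q_k,\,i_k)$, interpolates via $\min(A,i_k)\le A^{b_k}i_k^{1-b_k}$, and only then chooses $q_{k-1}$ adaptively as the largest integer with $C\,m_{k-1}^{a_{k-1}b_k-1}<1/q_{k-1}$, where $C$ is the constant accumulated from earlier levels; the window $m_{k-1}^{a_{k-1}b_k-1}<1/q_{k-1}<m_{k-1}^{a_{k-1}b_{k+1}-1}$ is then checked using $2C<m_{k-1}^{a_{k-1}(b_{k+1}-b_k)}$ for $m_{k-1}$ large. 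You instead fix $q_k$ non-adaptively at the lower end of the window and replace the interpolation by a two-case count (if $|I|\ge\ell_{k-1}$ then $I$ meets at most two level-$(k-1)$ intervals; if $|I|<\ell_{k-1}$ then $I$ meets one parent and at most $5|I|/s_k$ level-$k$ intervals, using $|I|>g_k\ge s_k/2$), reducing everything to $N_{k-1}>10\,m_{k-1}^{a_{k-1}b_k}$, which your choice of $\vec{q}$ supplies with margin $m_{k-1}^{a_{k-1}(b_{k+1}-b_k)}$. The essential mechanism is the same in both proofs: the strict increase of $\vec{b}$ creates an exponent gap that, for fast-growing $\vec{m}$, swallows the $k$-dependent constants, and monotonicity of $\vec{m}$ lets every ``large enough'' threshold be expressed as a function of $(k,m_k)$, so $f$ is well defined. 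What your route buys is an explicit formula for $q_k$ (depending only on $m_k,a_k,b_{k+2}$, slightly stronger than the lemma's last clause) and a more elementary counting estimate; it loses nothing essential. Three small repairs: your displayed lower bound for $N_{k-1}$ drops a constant factor $m_1^{-a_1}$ arising from the $j=2$ term (harmless, absorbed exactly like the $2^{-k}$); $\lfloor m_k^{1-a_kb_{k+2}}\rfloor$ is not literally computable from oracle access to the reals $a_k,b_{k+2}$, so take instead an integer computably located in the open interval $(m_k^{1-a_kb_{k+2}},m_k^{1-a_kb_{k+1}})$, which has length greater than $1$ once $m_k$ is large (the paper's ``largest $q_{k-1}$ such that\dots'' has the same pedantic issue); and your closing claim that $q_k$ \emph{must} sit at the lower end of its window is not accurate --- the paper's choice sits essentially at the upper end divided by $C$ --- the true requirement is only that $m_{k-1}/q_{k-1}$ exceed $m_{k-1}^{a_{k-1}b_k}$ by enough to beat the accumulated constants, which both your lower-end choice and the paper's adaptive near-upper-end choice achieve.
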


\begin{proof}
  We consider $E\in \E(\vec{q},\vec{m},\vec{a})$ and $\mu$ defined on $E$ as above from $\vec{a}$,
  an increasing sequence $\vec{m}=(m_k:k\geq 1)$ and $\vec{q}=(q_k: k\geq 1).$   For a given interval $I$, we
  estimate $\mu(I)$ and we deduce a sufficient growth rate on $\vec{m}$ and appropriate values for
  $\vec{q}$ in terms of $\vec{m}$ so as to ensure the desired inequality $\mu(I)<|I|^{b_k}$,
  for $I$ as specified.  The existence of $f$ follows by observing that these functions are
  computable from $\vec{a}$ and~$\vec{b}$.

  We follow a modified version of the proof of \Jarnik's Theorem as presented in \cite{Fal2003}.  We
  take $m_1$ to be larger than $3 \times 2^a$ and sufficiently large so that $1/m_1>2m_1^{-a}$. 

We let $E_0=[0,1]$ and for
$k\geq 1$ let $E_k$ consist of those intervals of $G_{m_k}(a_k)$ that are completely
contained in $E_{k-1}$ and are of the form $\left[\frac{r}{m_k}-\frac{1}{m_k^{a_k}},\frac{r}{m_k}+\frac{1}{m_k^{a_k}}\right]$ such that $r\equiv h_k\mod{q_k}$. 
Thus, the intervals of $E_k$ are of length $2/m_k^{a_k}$ and are separated by gaps of length at least
  \[
  g_k=\frac{q_k}{m_k}-\frac{2}{m_k^{a_k}}.
  \] 
  Let $i_k$ be the number of intervals of $E_{k}$ contained in a single interval of $E_{k-1}$.  By
  construction $i_1=m_1/q_1$ and for every~$k>1$,
  \begin{equation}
    \label{eq:1}
  i_k\geq \frac{1}{2} \frac{2}{(m_{k-1})^{a_{k-1}}}\,\frac{1}{q_k}\, m_k = \frac{\
    m_k}{(m_{k-1})^{a_{k-1}} q_k},
  \end{equation}
  which represents half of the product of the length of an interval in $E_{k-1}$ and the number of
  intervals with centers $p/m_k,$ where $p$ is an integer with fixed residue modulo $q_k$.  This
  estimate applies provided that $q_k$ is less than $m_k$ and $m_k$ is sufficiently large with
  respect to the value of~$m_{k-1}$.

  Now, we suppose that $S$ is a subinterval of $[0,1]$ of length $|S|\leq g_1$ and we estimate~$\mu(S)$.  Let
  $k$ be the integer such that $g_k\leq |S| < g_{k-1}$.  The number of $k$-level intervals that
  intersect~$S$~is
  \begin{itemize}
  \item  at most  $i_{k}$, since $S$ intersects at most one $(k-1)$-level interval,
  \item  at most $2+ |S|  {m_k}/{q_k} \leq 4|S|m_k/q_k$, by an
    estimate similar to that for the lower bound on~$i_k$.
  \end{itemize}

  Each $k$-level interval has measure $1/(i_1\times\ldots\times i_k)$.
  We have that 
  \begin{align}
    \mu(S)&\ \leq \ \frac{\min(4|S| m_k/q_k,i_k)}{i_1\times\ldots\times i_k}\label{eq:2} \\
          & \ \leq \ \frac{ (4|S| m_k/q_k)^{ b_k} \  i_{k}^{1-b_k}}{i_1\times\ldots\times
            i_k}\nonumber \qquad \text{(as $b_k \in [0,1]$)}\\
          &\ =\  \frac{4^{b_k} m_k^{b_k}}{(i_1\times \ldots\times i_{k-1}) \ i_k^{b_k} q_k^{b_k} }
            |S|^{b_k}\nonumber\\
          &\ \leq\    1      \frac{ m_{1}^{a_1} q_2  }{m_2}\ \ \frac{m_2^{a_2} q_3  }
      {m_{3}}
      \ldots 
      \frac
      { m_{k-2}^{a_{k-2}} q_{k-1}}
      {m_{k-1}}
      \ \
            \frac{4^{b_k}m_k^{{b_k}}}{i_k^{b_k} q_k^{b_k}} \ |S|^{b_k}\nonumber    \qquad \text{(by~\eqref{eq:1})}\\
    &\  \leq  \  \frac
      { m_{1}^{a_1} q_2  }
      {m_2}\ \
      \frac{m_2^{a_2} q_3  }
      {m_{3}}
      \ldots 
      \frac
      { m_{k-2}^{a_{k-2}} q_{k-1}}
      {m_{k-1}}
      \ \ 
      \left(\frac{ m_{k-1}^{a_{k-1}} q_k  }{m_k}\right)^{b_k}
      \frac{m_k^{{b_k}}}{q_k^{b_k}}
      4^{b_k}\
      |S|^{b_k}\nonumber \\
    & \  = \ (q_2\cdots q_{k-1})
      \big(m_{1}^{a_1} m_{2}^{a_2-1}  \ldots m_{k-2}^{a_{k-2}-1}\big)
      4^{b_k} m_{k-1}^{a_{k-1} b_k-1 }  
      |S|^{b_k}.\label{eq:3}  
  \end{align}

  We want to ensure that for all such $S$, $\mu(S) < |S|^{b_k}$, from which we may infer that
  $\mu(S) < |S|^{s}$ for all $s$ in $[0,b_k]$.  Thus, it suffices to show that there is a suitable
  growth function $f$ for the sequence $\vec{m}$, and there is a suitable sequence of values for
  $\vec{q}$ such that
  \begin{equation*}
(q_2\cdots q_{k-2}    q_{k-1})
    \big(m_{1}^{a_1} m_{2}^{a_2-1}  \ldots m_{k-2}^{a_{k-2}-1}\big)
    4^{b_k} m_{k-1}^{a_{k-1}b_k-1}
    < 1.
  \end{equation*}
  Equivalently,
  \begin{equation*}
    (q_2\cdots q_{k-2})
    \big(m_{1}^{a_1} m_{2}^{a_2-1}  \ldots m_{k-2}^{a_{k-2}-1}\big)
    4^{b_k}m_{k-1}^{a_{k-1}b_k-1} <  \frac{1}{q_{k-1}}.
  \end{equation*}
  If we let $C$ be the term that does not depend on $m_{k-1}$ or on $q_{k-1}$, we can satisfy the
  first claim of the Lemma by satisfying the requirement
  \begin{equation*}
   C m_{k-1}^{a_{k-1}b_k-1} <  \frac{1}{q_{k-1}}.\label{eq:4}
 \end{equation*}
 Since $C$ is greater than $1$, this requirement on $m_{k-1}$ and $q_{k-1}$ also ensures part of the
 second claim of the lemma, that $m_{k-1}^{a_{k-1}b_k-1} < 1/q_{k-1}$.  Since
 $b_k<b \leq 1/a\leq 1/a_{k-1}$, by making $m_{k-1}$ sufficiently large and letting $q_{k-1}$ take
 the largest value such that $C m_{k-1}^{a_{k-1}b_k-1} < 1/q_{k-1}$, we may assume that
 \[
C m_{k-1}^{a_{k-1}b_k-1} > \frac{1}{2q_{k-1}}.
\]  
Equivalently, we may assume that
 \[
2C m_{k-1}^{a_{k-1}b_k-1} > \frac{1}{q_{k-1}}.
\]
 The second clause in the second claim of the Lemma is that $1/q_{k-1} <m_{k-1}^{a_{k-1}b_{k+1}-1}$.
 Since $\vec{b}$ is strictly increasing, by choosing $m_{k-1}$ to be sufficiently large, we may
 ensure that 
\[
2C<m_{k-1}^{a_{k-1}(b_{k+1}-b_k)}
\]
 and so
 $m_{k-1}^{a_{k-1}b_k-1}<1/q_{k-1}<m_{k-1}^{a_{k-1}b_{k+1}-1}, $ as required.  Further, by appropriateness of $\vec{a}$ and $\vec{b}$, $b_k$
 is less than or equal to~$1/a_{k-1}$, the value of~$1/q_{k-1}$ can be made arbitrarily small by
 choosing $m_{k-1}$ to be sufficiently large, so the above estimates apply.
\end{proof}

\begin{lemma}\label{7}
  Suppose that $\vec{a}$ and $\vec{b}$ are appropriate sequences with limits $a$ and $b$ such that
  $a\geq 2$ and $0\leq b\leq 1/a$.  Let $f$ be as in Lemma~\ref{6}, $\vec{m}$ be such that for
  all~$k$, $m_{k+1}\geq f(k,m_k)$, and $\vec{q}$ be defined from these sequences as in
  Lemma~\ref{6}.  For every $E$ in $\E(\vec{q},\vec{m},\vec{a}),$ $E$ has Hausdorff dimension~$b$.
\end{lemma}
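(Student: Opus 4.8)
The plan is to prove separately that the Hausdorff dimension of $E$ is at least $b$ and that it is at most $b$. The first inequality should follow almost immediately from the measure estimate of Lemma~\ref{6} together with the Mass Distribution Principle, and the second from a direct computation using the canonical cover of $E$ by its level-$k$ intervals. Recall that $E=\bigcap_k E_k$, where $E_k$ is a union of $N_k:=i_1\cdots i_k$ intervals each of length $2/m_k^{a_k}$, and that the counts $i_j$, hence $N_k$, do not depend on the branch $\vec h$ (Definition~\ref{5}); so the argument will be uniform over all $E\in\E(\vec q,\vec m,\vec a)$.

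\textbf{Lower bound.} Since $\vec b$ is strictly increasing with limit $b$, I would fix $k>2$ with $b_k>0$. The first clause of Lemma~\ref{6} says that $\mu(I)<|I|^{b_k}$ for every interval $I$ with $|I|\le\delta_k:=\frac{q_{k-1}}{m_{k-1}}-\frac{2}{m_{k-1}^{a}}$, and $\delta_k>0$ because it is no smaller than the positive gap $g_{k-1}$ separating consecutive level-$(k-1)$ intervals. As $\mu(E)=1>0$, the Mass Distribution Principle (Lemma~\ref{3}, with $c=1$ and $\delta=\delta_k$) gives that the Hausdorff dimension of $E$ is at least $b_k$; letting $k\to\infty$ it is at least $b$. (For $b=0$ this is trivial.)

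\textbf{Upper bound.} I would fix $s>b$ and show that $N_k\bigl(2/m_k^{a_k}\bigr)^s\to 0$, which together with $2/m_k^{a_k}\to 0$ forces $H^s(E)=0$. Here $i_1=m_1/q_1$, and for $j\ge 2$ the counting estimate from the proof of Lemma~\ref{6}, valid once $m_j$ is sufficiently large relative to $m_{j-1}$ and $q_j$, gives $i_j\le 4m_j/(m_{j-1}^{a_{j-1}}q_j)$. Telescoping the resulting product over $j$ and substituting $1/q_j\le m_j^{a_jb_{j+2}-1}$ (second clause of Lemma~\ref{6}) leads to
\[
N_k\bigl(2/m_k^{a_k}\bigr)^s\ \le\ 2^{s}\Bigl(\prod_{j=1}^{k-1}4\,m_j^{\,a_j(b_{j+2}-1)}\Bigr)\,m_k^{\,a_k(b_{k+2}-s)} .
\]
Since $b_{j+2}<b\le 1/a\le 1/2$ and $a_j\ge 1$, we get $a_j(1-b_{j+2})\ge 1/2$, so $4\,m_j^{a_j(b_{j+2}-1)}\le 4m_j^{-1/2}\le 1/2$ for all large $j$ (as $m_j\to\infty$), and hence the product is bounded by a constant independent of $k$. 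Also $a_k(b_{k+2}-s)\le b_{k+2}-s<b-s<0$, so $m_k^{a_k(b_{k+2}-s)}\le m_k^{\,b-s}\to 0$. Thus the right-hand side tends to $0$, so $H^s(E)=0$ for every $s>b$, i.e. the Hausdorff dimension of $E$ is at most $b$. Combined with the lower bound, it equals $b$.

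I expect the only real obstacle to be the control of the accumulating factors $\prod_{j}4\,m_j^{a_j(b_{j+2}-1)}$ in the upper bound. This is exactly where the hypothesis $b\le 1/a$ enters --- it keeps $1-b_{j+2}$ bounded away from $0$ --- and where the prescribed growth of $\vec m$ must be taken large enough that these extra factors, which come from the many ``missing'' denominators introduced by the thinning in Definition~\ref{5}, do not overpower the decay $m_k^{b-s}$. Everything else --- the telescoping identity and the lower bound --- is routine.
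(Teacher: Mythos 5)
Your proof is correct and takes essentially the same route as the paper: the lower bound via Lemma~\ref{6} and the Mass Distribution Principle, and the upper bound by covering $E$ with its level-$k$ intervals and using $1/q_k\le m_k^{a_kb_{k+2}-1}$. The only difference is that the paper counts the level-$k$ intervals crudely by $m_k/q_k\le m_k^{a_kb_{k+2}}$ (ignoring containment in $E_{k-1}$), which makes the upper bound a one-line estimate and avoids your telescoping product and the extra step controlling $\prod_j 4\,m_j^{a_j(b_{j+2}-1)}$.
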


\begin{proof}
  Let $E$ be an element of $\E(\vec{q},\vec{m},\vec{a})$ and let $\mu$ be the uniform measure on
  $E$.  By Lemma~\ref{6}, for every $\beta<b$, for all sufficiently small intervals $I$,
  $\mu(I)<|I|^\beta.$  Consequently, by application of the Mass Distribution Principle, the Hausdorff
  dimension of $E$ is greater than or equal to~$b$.  Next, consider a real number $\beta$ strictly bigger 
  than~$b$.  For each $k\geq 1$, there are at most $m_k/q_k$ intervals in $E_k$, each
  with radius $1/m_k^{a_k}$.  Then, by Lemma~\ref{6} and the fact that $\vec{a}$ and $\vec{b}$ are appropriate,
\[
\sum_{j\leq {m_k}/{q_k}} \left(\frac{2}{m_k^{a_k}}\right)^\beta
\leq m_k\cdot \frac{2^\beta}{q_k}\cdot m_k^{-a_k\beta}
\leq 2^\beta \cdot  m_k^{ a_k b_{k+2}}\cdot m_k^{-a_k\beta}
\leq \frac{2^\beta}{m_k^{\beta-b_{k+2}}}
\leq\frac{2^\beta}{m_k^{\beta-b}},
\]
  which goes to 0 as $k$ goes to infinity.  It follows that $E$ has Hausdorff dimension less than or
  equal to~$b$, as required to complete the verification of the~Lemma.
\end{proof}

\subsection{Irrationality exponent~$a$, Hausdorff Dimension $b$ and $1/a\leq b\leq 2/a$}

\begin{definition}[Family of fractals $\F(\vec{q},\vec{m},\vec{a})$]\label{8}
  Let $\vec{m}$ be an increasing sequence of positive integers; let $\vec{q}$ be a sequence of
  integers such that for each $k,$ $q_k$ is between $1$ and the cardinality of the set of prime
  numbers in $[m_k,2m_k)$; let $\vec{H}$ be a sequence of subsets of primes such that, for each $k \geq 1$, $H_k$ contains exactly $q_k$ primes from the interval $[m_k, 2m_k )$; and let $\vec{a}$ be a
  non-decreasing sequence of real numbers greater than or equal to~$2$ with limit~$a$.
    \begin{itemize}
      \item Let $\F_1(\vec{H},\vec{q},\vec{m},\vec{a})$ be $[0,1]$.

      \item Given $\F_{k-1}(\vec{H},\vec{q},\vec{m},\vec{a}),$ let
        $\F_{k}(\vec{H},\vec{q},\vec{m},\vec{a})$ be the collection of intervals in 
\[
\bigcup_{p\in H_k} G_p(a_k) ,
\] 
        which are completely contained in intervals from
        $\F_{k-1}(\vec{H},\vec{q},\vec{m},\vec{a})$.

      \item Let $s$ denote the function that maps $k$ to the ratio given by $q_k$, the number of
        retained denominators, divided by the number possible denominators, which is the number of
        primes in $[m_k,2m_k)$.  Discard a negligible number of intervals so that each interval in
        $\F_{k-1}(\vec{H},\vec{q},\vec{m},\vec{a})$ has the same number of subintervals in
        $\F_{k}(\vec{H},\vec{q},\vec{m},\vec{a})$, and further, so that this number of subintervals
        depends only on $k,\vec{q},\vec{m},\vec{a}$ and not on $\vec{H}$.
    \end{itemize}
    Let $\F(\vec{q},\vec{m},\vec{a})$ be the family of fractals obtained by considering all
    possible sequences~$\vec{H}$.
\end{definition}

\begin{lemma}\label{9}
  Suppose that $\vec{a}$ and $\vec{b}$ are appropriate sequences with limits $a$ and $b$ such that
  $a>2$ and $1/a\leq b\leq 2/a$.  There is a function~$f$, computable from $\vec{a}$
  and~$\vec{b},$ such that for any sequence $\vec{m}=(m_k:k\in\N)$ for which for all~$k$,
  $m_{k+1}\geq f(k,m_k)$, there is a sequence $\vec{q}$, such that for all
  $E\in \F(\vec{q},\vec{m},\vec{a})$ and for $\mu$ the uniform measure on~$E$, the following conditions hold:
 \begin{itemize}
  \item For all $k>2$ and all intervals $I$ such that $|I|\leq    \frac{1}{4m_{k-1}^2}-\frac{2}{m_{k-1}^a},$ 
   \[
     \mu(I) <|I|^{b_k}.
   \]
 \item For all  integers $k,$
    \[
    \log(m_{k})m_k^{a_k b_{k+1}-2}\leq \frac{1}{q_k}\leq \log(m_{k})m_k^{a_k b_{k+2}-2}.
    \]
\end{itemize}
  Further, we can exhibit such an $\vec{q}$ for which $q_k$ is uniformly computable from
  $(a_1,\dots,a_k)$, $(b_1,\dots,b_k)$ and $(m_1,\dots,m_{k-1})$.
\end{lemma}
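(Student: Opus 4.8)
The plan is to run the proof of Lemma~\ref{6} with the single--denominator arithmetic--progression thinning replaced by the prime--denominator thinning of Definition~\ref{8}, keeping track of the two changes this forces. First, the branching number $i_k$ now carries a factor equal to $q_k$, the number of \emph{retained} primes, rather than a factor $1/q_k$, since keeping more primes produces more subintervals and hence a more diffuse measure; accordingly the extremal choice of $q_{k-1}$ will pin it from \emph{below}. Second, the intervals of $E_k$ are no longer equally spaced inside a parent interval: they split into $q_k$ arithmetic--progression--like combs, one for each retained prime, with gaps on two scales --- of order $m_k^{-1}$ between consecutive intervals of a single comb, and of order $m_k^{-2}$ between intervals of distinct combs. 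Controlling $\mu$ of a small interval against this two--tier structure, uniformly in the (arbitrarily fast--growing) sequence $\vec m$, is the main new point, and it is where the prime number theorem enters, the number of available denominators at stage $k$ being of order $m_k/\log m_k$; this is the source of the $\log m_k$ factors in the bounds on $\vec q$.

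Fix $E\in\F(\vec q,\vec m,\vec a)$ along a branch $\vec H$, let $\mu$ be its uniform measure, and write $E_k$ for the level--$k$ approximation; as in Lemma~\ref{6}, the interval lengths, the gap sizes, and the $i_k$ are branch--independent. I would first record the geometry just described, together with the separation estimate already noted above for $K_M(a)$. Then, with $i_k$ the number of intervals of $E_k$ inside one interval of $E_{k-1}$, the computation behind~\eqref{eq:1} gives, once $m_k$ is large enough relative to $m_{k-1}$,
\[
i_k\ \asymp\ \frac{q_k\,m_k}{m_{k-1}^{a_{k-1}}},
\]
each side being, up to a constant, the length $\asymp m_{k-1}^{-a_{k-1}}$ of a parent interval times the number $\asymp q_k m_k$ of admissible centres $r/p$, $p\in H_k$; telescoping, $i_1\cdots i_{k-1}\asymp(q_1\cdots q_{k-1})\,m_{k-1}\prod_{j=1}^{k-2}m_j^{1-a_j}$.

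The core estimate is that of $\mu(S)$ for a small $S$. Given $S$ with $|S|\le\tfrac1{4m_1^2}-\tfrac2{m_1^a}$, take $k$ largest with $|S|\le\tfrac1{4m_{k-1}^2}-\tfrac2{m_{k-1}^a}$; since $\vec b$ is increasing and $|S|<1$ it suffices to prove $\mu(S)<|S|^{b_k}$ for this $k$. I would bound the number $N$ of intervals of $E_k$ meeting $S$ from above in three ways, each valid at every admissible scale: $N=O(i_k)$, since $S$ meets only $O(1)$ intervals of $E_{k-1}$; $N\le q_k\bigl(2m_k|S|+2\bigr)$, by summing the per--comb count $|S|p+O(1)$ over $p\in H_k$; and $N\le 4m_k^2|S|+1$, from the $\tfrac1{4m_k^2}$--separation of intervals with distinct denominators. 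Feeding the relevant minimum into $\mu(S)\le N/(i_1\cdots i_k)$, interpolating with the exponent $b_k\in[0,1]$ as in the chain leading to~\eqref{eq:3}, and substituting the displayed estimates for $i_k$ and $i_1\cdots i_{k-1}$ (the powers of $q_k$ and of $m_k$ coming from $N$ cancelling those hidden in $i_k$, exactly as in Lemma~\ref{6}), one is led to a bound of the form
\[
\mu(S)\ \le\ C\cdot\frac{m_{k-1}^{a_{k-1}b_k-1}\,\prod_{j=1}^{k-2}m_j^{a_j-1}}{q_1\cdots q_{k-1}}\;|S|^{b_k},
\]
with $C$ depending only on $a$. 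Hence the first clause holds as soon as $q_{k-1}>C\cdot m_{k-1}^{a_{k-1}b_k-1}\prod_{j=1}^{k-2}m_j^{a_j-1}/(q_1\cdots q_{k-2})$, and, as in Lemma~\ref{6}, I would take $q_{k-1}$ to be the least value above this threshold subject to the cap that $q_{k-1}$ be at most the number of primes in $[m_{k-1},2m_{k-1})$; by Chebyshev's effective estimate that count is of order $m_{k-1}/\log m_{k-1}$, so such a choice exists once $m_{k-1}$ is large enough --- the role of $f$ --- and it is computable from the data already fixed. Since $1/q_{k-1}$ then lies within a bounded factor of the reciprocal of the threshold and $\vec b$ is strictly increasing, enlarging $m_{k-1}$ further forces $1/q_{k-1}$ into the stated two--sided window, the $\log m_{k-1}$ entering through the prime count.

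The step I expect to be the main obstacle is the verification, in that core estimate, at the \emph{inter--comb} scale $m_k^{-2}\lesssim|S|\lesssim m_k^{-1}$: there $N$ is no longer governed by $i_k$ but by the per--comb and separation bounds, so the three quantities above must be balanced against one another rather than used singly, and one has to check that the choice of $q_k$ forced by the threshold really does keep $\mu(S)$ below $|S|^{b_k}$. This is precisely where the two gap scales of the prime construction interact with the prime--counting input, and it is the only place the argument departs in substance from Lemma~\ref{6}; everything else is the bookkeeping of that lemma carried out with $q_k$ moved from the denominator to the numerator of $i_k$.
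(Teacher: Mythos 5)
Your overall strategy is the same as the paper's: rerun the proof of Lemma~\ref{6} with the thinning now acting on denominators, invoke the prime number theorem to count the eligible primes in $[m_k,2m_k)$ (the source of the $\log m_k$ factors), derive a threshold inequality from the interpolated measure estimate, and choose $q_{k-1}$ extremally against that threshold. The genuine problem is your calibration of $q_k$. You take $q_k$ to be the number of retained primes (Definition~\ref{8}'s wording does invite this reading), so your branching number is $i_k\asymp q_k m_k/m_{k-1}^{a_{k-1}}$ and your threshold forces $q_{k-1}\asymp m_{k-1}^{a_{k-1}b_k-1}$, hence $1/q_{k-1}\asymp m_{k-1}^{1-a_{k-1}b_k}$. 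But the second bullet of the lemma --- which Lemma~\ref{10} then uses verbatim, via the count of at most $4m_k^2/(\log(m_k)q_k)$ level-$k$ intervals --- is calibrated to the opposite convention, the one the paper's proof actually uses: one retains a $1/q_k$ \emph{fraction} of the primes, so about $m_k/(q_k\log m_k)$ denominators, giving $i_k\gtrsim m_k^2/\bigl(2^{a+1}m_{k-1}^{a_{k-1}}\log(m_k)\,q_k\bigr)$ and the threshold $C\log(m_{k-1})m_{k-1}^{a_{k-1}b_k-2}<1/q_{k-1}$, from which both clauses of the second bullet follow exactly as in Lemma~\ref{6} (largest admissible $q_{k-1}$, a factor $2$, and $2C<\log(m_{k-1})m_{k-1}^{a_{k-1}(b_{k+1}-b_k)}$). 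Under your convention the exponents simply do not match: $1-a_{k-1}b_k$ lies in the stated window $[a_{k-1}b_k-2,\;a_{k-1}b_{k+1}-2]$ (up to the logarithm) only when $a_{k-1}(b_k+b_{k+1})$ is about $3$. Concretely, for $a$ near $3$ and $b$ near $1/3$ your rule retains boundedly many primes, so $1/q_{k-1}$ stays bounded below by a constant, while the bullet demands $1/q_{k-1}$ of order $\log(m_{k-1})/m_{k-1}$. Your closing assertion that enlarging $m_{k-1}$ ``forces $1/q_{k-1}$ into the stated two-sided window'' cannot be right: enlarging $m_{k-1}$ rescales the threshold and the window together and cannot reconcile mismatched exponents. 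The fix is mechanical --- switch to the paper's parametrization and rerun your algebra --- but as written the second bullet (and hence the exhibited $\vec q$ demanded by the final clause) is not established.

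A smaller point: the regime you flag as the main obstacle, $|S|$ between the inter-comb scale $\sim m_k^{-2}$ and the intra-comb scale $\sim m_k^{-1}$, receives no special treatment in the paper. There the number of level-$k$ intervals meeting $S$ is bounded by $2+4|S|m_k^2/(q_k\log(m_k))$ at every admissible scale and the single interpolation of Lemma~\ref{6} is repeated, with no balancing of several counts. So once you adopt the correct parametrization, you should either justify such a single bound (your additive ``$+2$ per comb'' term is exactly what must be absorbed, e.g.\ using the $1/(4m_k^2)$ separation of centers with distinct denominators) or actually carry out the case analysis you sketch; at present you have announced the difficulty rather than resolved it.
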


\begin{proof}
  The proof of Lemma~\ref{9} has the same structure as the proof of Lemma~\ref{6}, with fundamental
  difference as follows.  Lemma~\ref{6} refers to $\E$, the tree of subfractals of
  $E^J(\vec{m},\vec{a})$, where the subfractals are obtained by recursion during which at step $k$
  only $1/q_k$ of the $m_k$-many eligible intervals in $G_{m_k}(a_k)$ are used. Lemma~\ref{9}
  refers to $\F$, the tree of subfractals  of $E^J(\vec{m},\vec{a})$,
  where the subfractals are obtained by recursion during which at step $k$ for only $1/q_k$ of the
  eligible denominators $m$, all of the intervals in $G_{m}(a_k)$ are used.  The set of eligible
  denominators is the set of prime numbers between $m_k$ and $2m_k$.  By choosing $m_0$ large
  enough, the prime number theorem implies that this set of eligible denominators has  between
  $m_k/2\log(m_k)$ and $2m_k/\log(m_k)$ many elements and each contributes at least $m_k$ many
  eligible intervals.  Now, we give an abbreviated account to indicate how this difference
  propagates through the proof.

 We consider $E\in\F(\vec{q},\vec{m},\vec{a})$ and $\mu$ the uniform measure on $E$.  We let $i_k$
 be the number of intervals in $E_k$ contained in a single interval of $E_{k-1}$.  Now, we have the
 following version of Inequality~(\ref{eq:1}),
 \[
 i_k\geq
 \frac12\,\frac{2}{(2m_{k-1})^{a_{k-1}}}\,\frac{1}{q_k}\,\frac{m_k^2}{2\log(m_k)}>
  \frac{m_k^2}{2^{a+1}\,(m_{k-1})^{a_{k-1}}\,\,\log(m_k)\,q_k}.
 \]
The intervals in $E_k$ are separated by gaps of length at least $g_k=1/4m_k^2-(2/m_k^{a_k})$,
because given two
intervals with centers $c_1/d_1$ and $c_2/d_2$, the gap between them is
\[
\left|\frac{c_1d_2-c_2d_1}{d_1d_2}\right|-\left(\frac{1}{c^{a_k}}+\frac{1}{d^{a_k}}\right).
\]  
The numerator in the first
fraction is at least $1$ and the denominator is no greater than~$(2m_k)^2$.  The denominators in
the second term are at least~$m_k^{a_k}$.  

Now we suppose that $S$ is a subinterval of $[0,1]$ of length $|S|\leq g_1$ and we estimate~$\mu(S)$.
Let $k$ be the integer such that $g_k\leq |S| < g_{k-1}$.  The number of $k$-level intervals that
intersect~$S$~is
  \begin{itemize}
  \item  at most  $i_{k}$, since $S$ intersects at most one $(k-1)$-level interval,
  \item at most $2+ |S| {4m_k^2}/( q_k\log(m_k))$  which is less than $8|S|m^2_k/(q_k\log(m_k))$.
  This is because there are  at most $2m_k/\log{m_k}$ primes between $m_k$ and
  $2m_k$ and each prime contributes at most $2m_k$ many intervals in $[0,1]$.  
  And we keep $1/q_k$ of these.
\end{itemize}
  As before, each $k$-level interval has measure $1/(i_1\times\ldots\times i_k)$.  Then, we have a version
  of Inequality~(\ref{2}).
  \[
  \mu(S)\leq \frac{\min\left(\frac{8|S|m^2_k}{q_k\log(m_k)}, i_k\right)}{i_1\times\ldots\times i_k}.
  \]
  Upon manipulation as before, we have a version of Inequality~(\ref{eq:3}).
  \[
  \mu(S)\leq 8^{b_k}2^{k(a+1)} (q_2\cdots q_{k-1})\,(\log(m_2)\cdots\log(m_{k-1})\,(m_1^{a_1}m_2^{a_2-2}\cdots m_{k-2}^{a_{k-2}-2})m_{k-1}^{a_{k-1}b_{k}-2}|S|^{b_k}.
  \]
 To ensure that $\mu(S)<|S|^{b_k}$, we must find a suitable growth function $f$ for the sequence
 $\vec{m}$ so that there is a suitable sequence of values for $\vec{q}$ such that
 \[
 8^{b_k}2^{k(a+1)} (q_2\cdots q_{k-1})\,(\log(m_2)\cdots\log(m_{k-1}))\,(m_1^{a_1}m_2^{a_2-2}\cdots
 m_{k-2}^{a_{k-2}-2})m_{k-1}^{a_{k-1}b_{k}-2}<1.
 \]
 Equivalently,
 \[
 8^{b_k}2^{k(a+1)} (q_2\cdots q_{k-2})\,(\log(m_2)\cdots\log(m_{k-1}))\,(m_1^{a_1}m_2^{a_2-2}\cdots
 m_{k-2}^{a_{k-2}-2})m_{k-1}^{a_{k-1}b_{k}-2}<1/q_{k-1}.
 \]
 Let $C$ be the term that does not depend on $m_{k-1}$ or $q_{k-1}$.  We can satisfy the first claim
 of Lemma~\ref{9} by satisfying the requirement
 \[
 C\log(m_{k-1}) m_{k-1}^{a_{k-1}b_k-2}<1/q_{k-1}.
 \]
 Since $C$ is greater than $1$, this requirement on $m_{k-1}$ and on $q_{k-1}$ also ensures part of
 the second claim of the lemma, that $\log(m_{k-1})m_{k-1}^{a_{k-1}b_k-2}<1/q_{k-1}$.  Since $b_{k}\leq
 2/a<2/a_{k-1}$, $a_{k-1}b_k-2$ is negative.  By making $m_{k-1}$ sufficiently large and letting 
 $q_{k-1}$ take the largest value  such that $ C\log(m_{k-1}) m_{k-1}^{a_{k-1}b_k-2}<1/q_{k-1}$, we
 may assume that
 \[
 C\log(m_{k-1}) m_{k-1}^{a_{k-1}b_k-2}>1/2q_{k-1}.
 \]
 The second clause in the second claim of the lemma is that $1/q_{k-1} <m_{k-1}^{a_{k-1}b_{k+1}-2}$.
 Since $\vec{b}$ is strictly increasing, by choosing $m_{k-1}$ to be sufficiently large, we may
 ensure that 
\[
2C<\log(m_{k-1})m_{k-1}^{a_{k-1}(b_{k+1}-b_k)}
\]
 and so
 $\log(m_{k-1})m_{k-1}^{a_{k-1}b_k-1}<1/q_{k-1}<\log(m_{k-1})m_{k-1}^{a_{k-1}b_{k+1}-2}, $ as
 required.  Further, since $b_k$ 
 is less than or equal to~$1/a_{k-1}$, the value of~$1/q_{k-1}$ can be made arbitrarily small by
 choosing $m_{k-1}$ to be sufficiently large, so the above estimates apply. 
\end{proof}

\begin{lemma}\label{10}
  Suppose that $\vec{a}$ and $\vec{b}$ are appropriate sequences of real numbers with limits $a$ and $b$ such that
  $a>2$, $ 1/a \leq b\leq 2/a$ and $1/a_1\leq b_1$.  Let $f$ be as in Lemma~\ref{9}, $\vec{m}$ be
  such that for all~$k$, $m_{k+1}\geq f(k,m_k)$, and $\vec{q}$ be defined from these sequences as in
  Lemma~\ref{9}.  For every $E$ in $\F(\vec{q},\vec{m},\vec{a}),$ $E$ has Hausdorff dimension~$b$.
\end{lemma}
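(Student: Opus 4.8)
The plan is to mirror the proof of Lemma~\ref{7}, which handled the case $b \le 1/a$, adapting the two directions to the bounds supplied by Lemma~\ref{9}. As before, fix $E \in \F(\vec{q},\vec{m},\vec{a})$ and let $\mu$ be the uniform measure on $E$. The lower bound on the Hausdorff dimension comes directly from the first clause of Lemma~\ref{9}: for every $\beta < b$, since $\vec{b}$ increases to $b$, we have $\beta < b_k$ for all sufficiently large $k$, hence $\mu(I) < |I|^{\beta}$ for all sufficiently small intervals $I$; the Mass Distribution Principle (Lemma~\ref{3}) then gives $\dim_H E \ge b$.

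For the upper bound, fix $\beta > b$ and cover $E$ by the level-$k$ intervals $E_k$ of $\F$. There are at most (roughly) $2m_k/\log(m_k)$ primes in $[m_k,2m_k)$, of which we keep a fraction $1/q_k$, and each retained prime $p$ contributes at most $2p \le 4m_k$ intervals of $G_p(a_k)$, each of radius $1/p^{a_k} \le 1/m_k^{a_k}$, so of diameter at most $2/m_k^{a_k}$. Thus the number of level-$k$ intervals is at most a constant times $m_k^2/(q_k \log(m_k))$, and
\[
\sum_{I \in E_k} |I|^{\beta} \;\le\; \frac{c\, m_k^2}{q_k \log(m_k)} \left(\frac{2}{m_k^{a_k}}\right)^{\beta}
\;\le\; c\, 2^{\beta}\, m_k^{2 - a_k \beta}\, \frac{1}{q_k \log(m_k)}.
\]
Now apply the right-hand inequality of the second clause of Lemma~\ref{9}, namely $1/q_k \le \log(m_k) m_k^{a_k b_{k+2} - 2}$, to get the bound $c\,2^{\beta} m_k^{2 - a_k\beta} \cdot m_k^{a_k b_{k+2} - 2} = c\,2^{\beta} m_k^{a_k(b_{k+2} - \beta)}$. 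Since $b_{k+2} < b < \beta$ and $a_k \ge 2$, the exponent $a_k(b_{k+2}-\beta)$ is bounded above by $2(b - \beta) < 0$ for all $k$, so this sum tends to $0$ as $k \to \infty$. Hence for every $\beta > b$ the $\beta$-dimensional Hausdorff measure of $E$ is $0$, so $\dim_H E \le b$; combined with the lower bound, $\dim_H E = b$.

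The routine points to check are that the diameters $2/m_k^{a_k}$ of the covering intervals indeed go to $0$ (so these are legitimate covers for computing $H^\beta$), which holds since $m_k \to \infty$ and $a_k \ge 2$, and that the constants absorbed into $c$ (the factor from "at most $2m_k/\log m_k$ primes'' and from "at most $2p$ intervals per prime'') do not depend on $k$; both are immediate from the construction in Definition~\ref{8} and the prime number theorem estimate used in Lemma~\ref{9}. I do not anticipate a genuine obstacle here: the only subtlety, compared with Lemma~\ref{7}, is keeping track of the extra $\log(m_k)$ factors and the exponent $2$ (rather than $1$) coming from the two-parameter denominator count $d_1 d_2 \le (2m_k)^2$, but these are exactly matched by the $\log(m_k)m_k^{\,\cdot\,-2}$ shape of the bounds on $1/q_k$ in Lemma~\ref{9}, so they cancel cleanly. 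The hypothesis $1/a_1 \le b_1$ (equivalently the appropriateness condition that forces $1/a < b_1$ when relevant) is what guarantees $a_k b_k - 2 < 0$ and more generally that $q_k$ can be chosen as a positive integer at every stage, so that the family $\F(\vec{q},\vec{m},\vec{a})$ is nonempty and the estimates of Lemma~\ref{9} are available.
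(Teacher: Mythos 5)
Your proposal is correct and follows essentially the same approach as the paper: the lower bound comes from the first clause of Lemma~\ref{9} together with the Mass Distribution Principle, and the upper bound from covering $E$ by the at most (constant)$\,\cdot\, m_k^2/(q_k\log m_k)$ level-$k$ intervals of diameter $2/m_k^{a_k}$ and using $1/q_k\le \log(m_k)\,m_k^{a_k b_{k+2}-2}$, so the $\beta$-dimensional sums tend to $0$ for every $\beta>b$. The only (harmless) differences are bookkeeping: you invoke $a_k\ge 2$ where the paper uses $a_k\ge 1$ to uniformize the exponent, and your closing aside credits the hypothesis $1/a_1\le b_1$ with ensuring $a_kb_k-2<0$ (that actually follows from $b\le 2/a$; the role of $1/a_1\le b_1$ is to keep $q_k$ within the available stock of primes in Lemma~\ref{9}) — neither point affects the argument.
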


\begin{proof}
   Let $E$ be an element of $\F(\vec{q},\vec{m},\vec{a})$ and let $\mu$ be the uniform measure on
  $E$.  By Lemma~\ref{9}, for every $\beta<b$, for all sufficiently small intervals $I$,
  $\mu(I)<|I|^\beta.$  Consequently, by application of the Mass Distribution Principle, the Hausdorff
  dimension of $E$ is greater than or equal to~$b$.  Next, consider a real number $\beta$ greater
  than~$b$.  For each $k\geq 1$, there are at most $4m_k^2/(\log(m_k)q_k)$ intervals in $E_k$, each
  with radius less than or equal to $1/m_k^{a_k}$.  Then, by Lemma~\ref{9} and the fact that $a_k \geq 1$,
  \begin{eqnarray*}
\sum_{j\leq 4m_k^2/(\log(m_k)q_k)} \left(\frac{2}{m_k^{a_k}}\right)^\beta&\leq&
                                                                           \frac{4m_k^2}{\log(m_k)}\cdot
                                                                           \frac{1}{q_k}\cdot
                                                                           2^\beta m_k^{-a_k\beta}\\ 
&\leq& \frac{4m_k^2}{\log(m_k)}\cdot \log(m_{k})m_k^{a_k b_{k+2}-2}\cdot 2^\beta m_k^{-a_k\beta}\\
&\leq& \frac{4 \cdot 2^\beta}{m_k^{a_k(\beta-b_{k+2})}}\\
&\leq&\frac{4 \cdot 2^\beta}{m_k^{\beta-b}},
  \end{eqnarray*}
  which goes to 0 as $k$ goes to infinity.  It follows that $E$ has Hausdorff dimension less than or
  equal to~$b$, as required to complete the verification of the~Lemma.
\end{proof}

\subsection{Irrationality Exponent 2, Hausdorff Dimension $b$, and $0< b\leq 1$}

The next case to consider is that in which the desired irrationality exponent is equal to  $2$ and
the desired effective Hausdorff dimension is equal to $b\in (0,1]$.  In fact, we need only consider
the case of $b<1$, since almost every real number with respect to Lebesgue measure has both irrationality
exponent equal to~$2$ and effective Hausdorff dimension equal to~$1$.

\begin{definition}[Family of fractals $\G(\vec{q},\vec{m})$]\label{11}
  Let $\vec{m}$ be an increasing sequence of positive integers such that for all $k$, $m_k$ divides
  $m_{k-1}$; let $\vec{q}$ be a sequence of
  integers;
  and let $\vec{h}$ be a sequence of integers such that for each $k,$ $h_k\in[0,q_k)$.
    \begin{itemize}
      \item Let $\G_{1}(\vec{h},\vec{q},\vec{m})$ be $[0,1]$.   
      \item Given $\G_{k-1}(\vec{h},\vec{q},\vec{m}),$ let
        $\G_{k}(\vec{h},\vec{q},\vec{m})$ be the collection of intervals 
        which are completely contained in intervals from
        $\G_{k-1}(\vec{h},\vec{q},\vec{m})$ and which are of the form
        $\left[\frac{r}{m_k},\frac{r}{m_k}+\frac{1}{m_k}\right]$ such that $r\equiv h_k\mod{q_k}$.  
        As usual, discard a negligible number of intervals so that each interval in
        $\G_{k-1}(\vec{h},\vec{q},\vec{m})$ has the same number of subintervals in
        $\G_{k}(\vec{h},\vec{q},\vec{m})$.  Further, ensure that this number of subintervals is
        independent of~$\vec{h}$.  
    \end{itemize}
    Let $\G(\vec{q},\vec{m})$ be the family of fractals obtained by considering all
    possible sequences~$\vec{h}$.
\end{definition}

In the following Lemma~\ref{12} is parallel to Lemmas \ref{6} and \ref{9}.  Likewise, the next
Lemma~\ref{13} is parallel to Lemmas \ref{7} and \ref{10}.  In fact, the estimates here are simpler
than the earlier ones. We leave the proofs for the interested reader.

\begin{lemma}\label{12}
  Suppose $\vec{b}$ is strictly increasing sequence of real numbers with limit $b$ such that
  $0< b<1$.  There is a function $f$, computable from $\vec{b},$ such that for any sequence
  $\vec{m}=(m_k:k\in\N)$ for which for all $k$, $m_{k+1}\geq f(k,m_k)$, there is a sequence
  $\vec{q}$, such that for all $E\in \G(\vec{q},\vec{m})$ and for $\mu$ the uniform measure on $E$,
  the following conditions hold.
  \begin{itemize}
  \item For all $k$ greater than $2$ and all intervals $I$ such that 
    $|I|\leq 
    \frac{q_{k-1}-1}{m_{k-1}}$, 
    \[\mu(I)<|I|^{b_k}.
     \]
  \item For all integers $k$, 
   \[2
   m_k^{2b_{k+1}-1}\leq \frac{1}{q_k}\leq m_k^{2b_{k+2}-1}.
   \]
    \end{itemize}
    Further, we can compute $q_k$ from $(b_1,\dots,b_{k+2})$ and $(m_1,\dots,m_k)$.
\end{lemma}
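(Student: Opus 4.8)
The plan is to repeat, in a streamlined form, the argument of Lemma~\ref{6}. Fix $\vec h$, let $E=\bigcap_k E_k\in\G(\vec q,\vec m)$ with $E_k=\G_k(\vec h,\vec q,\vec m)$, and let $\mu$ be the uniform measure on $E$. By Definition~\ref{11}, the intervals of $E_k$ have length $1/m_k$ and, since $m_{k-1}$ divides $m_k$ and we keep only left endpoints $r/m_k$ with $r\equiv h_k\pmod{q_k}$, they are spaced $q_k/m_k$ apart, hence separated by gaps of length $g_k=(q_k-1)/m_k$ (the quantity in the first bullet). Letting $i_k$ be the number of $E_k$-intervals inside an $E_{k-1}$-interval, divisibility gives $i_1=\floor{m_1/q_1}$ and, for $k>1$, $i_k\asymp m_k/(m_{k-1}q_k)$ — the analogue of Inequality~\eqref{eq:1}, needing only that $q_k$ be small relative to $m_k/m_{k-1}$, which the growth condition will guarantee. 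Telescoping gives $i_1\cdots i_k\asymp m_k/(q_1\cdots q_k)$, and each $k$-level interval carries $\mu$-mass $1/(i_1\cdots i_k)$.

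Next I would estimate $\mu(S)$ for $S$ with $|S|\le g_1$; this suffices for the first bullet, since for the rapidly growing $\vec m$ we pick the $g_k$ decrease, and, $\vec b$ being increasing with $|S|<1$, a bound $\mu(S)<|S|^{b_j}$ with $j\ge k$ implies $\mu(S)<|S|^{b_k}$. Choose $k$ with $g_k\le|S|<g_{k-1}$. As in Lemma~\ref{6}, $S$ meets at most one $(k-1)$-level interval, hence at most $i_k$ of the $k$-level intervals, and also at most $2+|S|m_k/q_k$ of them, which for $q_k\ge 2$ is $O(|S|m_k/q_k)$. Interpolating through $\min(x,y)\le x^{b_k}y^{1-b_k}$ (legitimate as $b_k\in[0,1]$) and inserting the lower bounds for $i_1\cdots i_{k-1}$ and $i_k^{b_k}$ yields, after routine rearrangement, an estimate of exactly the shape of Inequality~\eqref{eq:3}: a product of $|S|^{b_k}$, a single power $m_{k-1}^{\gamma_k}$ with exponent $\gamma_k<0$ (negative because $b_k<b\le1$), and a constant depending only on $(q_1,\dots,q_{k-1})$ and the earlier $m_j$'s. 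What makes this case strictly simpler than Lemmas~\ref{6} and~\ref{9} is precisely the absence of \Jarnik\ widths $1/p^{a_k}$: no $m_1^{a_1}m_2^{a_2-1}\cdots$ product accumulates, and there is no prime-counting correction.

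The endgame then coincides with that of Lemma~\ref{6}. To force the prefactor below $1$, isolate $q_{k-1}$: the requirement reads $C\,m_{k-1}^{\gamma_k}<1/q_{k-1}$ with $C\ge1$ built from $(q_1,\dots,q_{k-2})$ and the earlier $m_j$, independent of $m_{k-1},q_{k-1}$. As $\gamma_k<0$, for $m_{k-1}$ large this leaves room for $q_{k-1}>1$; using crude bounds $q_j<m_j<m_{k-1}$ to make the needed threshold on $m_{k-1}$ depend only on $k$, $\vec b$ and $m_{k-1}$ defines $f(k-1,m_{k-1})$. Taking $q_{k-1}$ maximal subject to the inequality gives $C\,m_{k-1}^{\gamma_k}<1/q_{k-1}\le 2C\,m_{k-1}^{\gamma_k}$; the left inequality, with $C\ge1$, delivers the lower half of the second bullet, and the right one — enlarging $m_{k-1}$ so that $2C$ is dominated by the positive power of $m_{k-1}$ afforded by the strict gap $b_{k+1}-b_k>0$, exactly as at the close of the proof of Lemma~\ref{6} — delivers the upper half. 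Re-indexing $k\mapsto k-1$ states both for $q_k$ and confirms that $1/q_{k-1}$ can be made as small as the earlier estimates require. Since $C$, the threshold on $m_{k-1}$, and the value $q_{k-1}$ are explicit finite computations from finitely many $b_j$ and $m_j$, the uniform computability of $q_k$ follows.

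I do not anticipate a real obstacle; as the authors note, the estimates here are simpler than the earlier ones. The two points that still want a little care are the bookkeeping of the accumulated constant across levels — controlled, as in Lemma~\ref{6}, by choosing $f$ so that the lone factor $m_{k-1}^{\gamma_k}$ overwhelms it — and the corner case where the permitted $q_{k-1}$ equals $1$ and $g_{k-1}=0$, ruled out by taking $m_{k-1}$ large enough. Finally, effective Hausdorff dimension plays no role at this stage: Lemma~\ref{12} is purely metric, destined to feed the Mass Distribution Principle (Lemma~\ref{3}) in the proof of Lemma~\ref{13}, while the effective-dimension bookkeeping via Kolmogorov complexity of rational approximations is deferred to the proof of Theorem~\ref{2}.
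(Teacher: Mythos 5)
Your overall strategy---rerun the proof of Lemma~\ref{6} with interval length $1/m_k$ in place of $2/m_k^{a_k}$, gaps $g_k=(q_k-1)/m_k$, $i_k\asymp m_k/(m_{k-1}q_k)$, the $\min$-interpolation with exponent $b_k$, then take $q_{k-1}$ maximal against the resulting requirement---is exactly the adaptation the paper intends (it gives no proof, saying the estimates are simpler than those of Lemmas~\ref{6} and~\ref{9}). But the one step you wave through, that the sandwich ``delivers the lower half and the upper half of the second bullet exactly as at the close of the proof of Lemma~\ref{6}'', is precisely where the argument fails as stated. If you carry out your own computation, the analogue of~\eqref{eq:1} has $m_{k-1}$ where Lemma~\ref{6} had $m_{k-1}^{a_{k-1}}$, so the prefactor collapses to $C\,m_{k-1}^{b_k-1}$, i.e.\ $\gamma_k=b_k-1$. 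Taking $q_{k-1}$ maximal with $C\,m_{k-1}^{b_k-1}<1/q_{k-1}$ forces $1/q_{k-1}\le 2C\,m_{k-1}^{b_k-1}$, which is strictly smaller than the stated lower bound $2m_{k-1}^{2b_k-1}$ as soon as $m_{k-1}^{b_k}>C$; so your choice of $\vec q$ does not satisfy the displayed second bullet. In Lemma~\ref{6} the matching worked only because the measure estimate produced the exponent $a_{k-1}b_k-1$ that literally appears in that lemma's bullet; here it produces $b_k-1$, not $2b_k-1$, and you never check this.

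The mismatch is not only in your write-up: with Definition~\ref{11} as printed (intervals of length $1/m_k$, and $m_{k-1}\mid m_k$), the second bullet as stated cannot hold at all once $b_{k+1}\ge 1/2$, since then $2m_k^{2b_{k+1}-1}\ge 2>1/q_k$; and even for $b<1/2$, a $\vec q$ obeying it gives the covering count $m_k/q_k\approx m_k^{2b_{k+2}}$ for intervals of length $1/m_k$, so the upper-bound computation in Lemma~\ref{13} would yield dimension $2b$ rather than $b$. The exponents $2b_{k+1}-1$, $2b_{k+2}-1$ are the Lemma~\ref{6} bounds with $a_k=2$, i.e.\ they fit \Jarnik\ intervals of width $\sim m_k^{-2}$, not the length-$1/m_k$ intervals of $\G$. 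So you should either prove the corrected bullet $m_k^{b_{k+1}-1}\le 1/q_k\le m_k^{b_{k+2}-1}$---which your argument does give essentially verbatim, and which is what Lemma~\ref{13} and the Kolmogorov-complexity count in Theorem~\ref{2} actually require---or reinterpret Definition~\ref{11} with intervals of width $m_k^{-2}$, in which case your $g_k$, your $i_k$, and the first bullet's threshold $\frac{q_{k-1}-1}{m_{k-1}}$ all have to be recomputed. As written, asserting both bullets ``as stated'' without ever computing $\gamma_k$ is a genuine gap, and it hides an inconsistency you were in a position to detect.
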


\begin{lemma}\label{13}
  Suppose $\vec{b}$ is strictly increasing sequence of real numbers with limit $b$ such that
  $0< b<1$.  Let $f$ be as in Lemma~\ref{12}, $\vec{m}$ be such that for all~$k$,
  $m_{k+1}\geq f(k,m_k)$, and $\vec{q}$ be defined from these sequences as in Lemma~\ref{12}.  For
  every $E$ in $\G(\vec{q},\vec{m}),$ $G$ has Hausdorff dimension~$b$.
\end{lemma}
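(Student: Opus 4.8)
The plan is to replay the arguments of Lemmas~\ref{7} and~\ref{10}, to which Lemma~\ref{13} is parallel, in the simpler setting of Definition~\ref{11}. As the remark preceding Lemma~\ref{12} points out, the building blocks are now the dyadic-type intervals $\bigl[\tfrac{r}{m_k},\tfrac{r}{m_k}+\tfrac{1}{m_k}\bigr]$ rather than \Jarnik\ intervals, so the counting is strictly easier; in particular no $\log m_k$ factors and no ranges of primes intervene. Fix $E\in\G(\vec q,\vec m)$, let $\mu$ be the uniform measure on $E$, and write $E_k$ for the level-$k$ union of intervals, so that $E=\bigcap_k E_k$ and every interval of $E_k$ has length $1/m_k$.

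For the lower bound I would combine the first clause of Lemma~\ref{12} with the Mass Distribution Principle (Lemma~\ref{3}). Fix $\beta<b$. Since $\vec b$ is strictly increasing with limit $b$, some $k>2$ satisfies $b_k\geq\beta$; for that $k$, Lemma~\ref{12} gives $\mu(I)<|I|^{b_k}\leq|I|^{\beta}$ for every interval $I$ with $|I|\leq(q_{k-1}-1)/m_{k-1}$ (note $q_{k-1}<m_{k-1}$, so such $I$ has $|I|<1$). The Mass Distribution Principle then shows the Hausdorff dimension of $E$ is at least $\beta$, and letting $\beta\uparrow b$ gives that it is at least $b$.

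For the upper bound, fix $\beta>b$ and choose $k_1$ so that $b_{k+2}<\beta$ for all $k\geq k_1$. For such $k$, $E_k$ covers $E$ by intervals of length $1/m_k$, and since each of them is one of the intervals $\bigl[\tfrac{r}{m_k},\tfrac{r}{m_k}+\tfrac{1}{m_k}\bigr]$ with $0\leq r<m_k$ and $r$ lying in a single residue class modulo $q_k$, there are at most $O(m_k/q_k)$ of them. Hence
\[
\sum_{I\in E_k}|I|^{\beta}\ \leq\ O\!\Bigl(\frac{m_k}{q_k}\Bigr)\,m_k^{-\beta}\,,
\]
and feeding in the upper estimate for $1/q_k$ from the second clause of Lemma~\ref{12} collapses the right-hand side — by exactly the cancellation in the displayed estimate of the proof of Lemma~\ref{10}, whose final line is $\frac{4\cdot2^{\beta}}{m_k^{\beta-b}}$ — to a quantity of the form $C\,m_k^{-(\beta-b_{k+2})}\leq C\,m_k^{-(\beta-b)}$, which tends to $0$ as $k\to\infty$ because $\vec m$ is increasing and $\beta>b$. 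Thus the $\beta$-dimensional Hausdorff measure of $E$ vanishes, so the Hausdorff dimension of $E$ is at most $\beta$; letting $\beta\downarrow b$ finishes the proof.

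There is no genuine obstacle here beyond arithmetic bookkeeping — which is exactly why the authors leave it to the reader: the construction is tuned through Lemma~\ref{12} precisely so that the mass-distribution estimate and the covering estimate meet at exponent $b$. The two points that need care are (i) the index shifts, since Lemma~\ref{12} controls $\mu(I)$ in terms of $b_k$ but controls $q_k$ in terms of $b_{k+1}$ and $b_{k+2}$, so one invokes the strict monotonicity of $\vec b$ at the appropriate steps; and (ii) the divisibility relation between consecutive terms of $\vec m$ imposed in Definition~\ref{11}, which makes each level-$(k-1)$ interval split into an exact number of level-$k$ intervals and thereby keeps the interval count clean. Both are handled just as in Lemmas~\ref{7} and~\ref{10}.
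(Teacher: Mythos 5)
The paper itself leaves this proof to the reader, and your overall plan (Mass Distribution Principle for the lower bound, level-$k$ covering for the upper bound, mirroring Lemmas~\ref{7} and~\ref{10}) is exactly the intended one; your lower-bound half is fine. But the upper-bound half contains a genuine gap: the cancellation you assert does not happen with the quantities you actually use. In the family $\G(\vec q,\vec m)$ of Definition~\ref{11} the level-$k$ intervals have length $1/m_k$, and there are at most about $m_k/q_k$ of them; plugging in the stated upper bound $1/q_k\leq m_k^{2b_{k+2}-1}$ from Lemma~\ref{12} gives
\[
\sum_{I\in E_k}|I|^{\beta}\ \lesssim\ \frac{m_k}{q_k}\,m_k^{-\beta}\ \leq\ m_k^{\,2b_{k+2}-\beta},
\]
not $C\,m_k^{-(\beta-b_{k+2})}$ as you claim. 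This tends to $0$ only when $\beta>2b_{k+2}$, so your argument as written yields $\dim_H E\leq 2b$, not $\leq b$ (and for $b>1/2$ the inequality $1/q_k\geq 2m_k^{2b_{k+1}-1}$ you would be relying on cannot even hold for an integer $q_k\geq 1$). The cancellation in Lemma~\ref{10} works because there the interval length is $2/m_k^{a_k}$ and the count carries $m_k^{2}$, matching the $-2$ in the exponent of the $q_k$-bound; here the count carries only $m_k^{1}$ while the cited bound still carries $2b_{k+2}$, so the factor $2$ survives.

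To repair this you must confront the exponent bookkeeping rather than cite it away. If you rederive the second clause of Lemma~\ref{12} by running the Lemma~\ref{6} computation for intervals of length $1/m_k$ (which is what Definition~\ref{11} specifies), the condition $\mu(S)<|S|^{b_k}$ forces $q_{k-1}$ to satisfy roughly $C\,m_{k-1}^{\,b_k-1}<1/q_{k-1}<m_{k-1}^{\,b_{k+1}-1}$, i.e.\ $q_k\approx m_k^{1-b}$; with $1/q_k\leq m_k^{\,b_{k+2}-1}$ your covering sum becomes $\lesssim m_k^{\,b_{k+2}-\beta}\to 0$ for every $\beta>b$, and both halves of your proof go through (the displayed exponents $2b_{k+1}-1$, $2b_{k+2}-1$ in Lemma~\ref{12} appear to be a carryover from the $a_k$-weighted cases and are inconsistent with length-$1/m_k$ intervals). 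Alternatively, if one insists on the exponents of Lemma~\ref{12} as printed, the intervals must be read as having length comparable to $m_k^{-2}$ (centered at $r/m_k$, as in $\E$ with $a_k\equiv 2$), and then your covering sum must use that length, which again restores the cancellation. Either way, stating that the estimate ``collapses exactly as in Lemma~\ref{10}'' is not enough: the mismatch between the interval length and the $q_k$-bound is precisely the arithmetic this lemma asks you to check.
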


\section{Hausdorff Dimension and Irrationality Exponent}

In this section, we will put together the pieces and prove Theorem~\ref{1}.

%
%
%

In the proof we will use the following definition.

\begin{definition} For positive integers $d, d_1, d_2, a^*$, we let
\begin{eqnarray*}
    B(d_1,d_2,a^*)&=& \bigcup_{\substack{p,q \in \N \\ d_1\leq q\leq d_2 }} 
\left[\frac{p}{q}-\frac1{q^{a^*}},\frac{p}{q}+\frac1{q^{a^*}}\right]
\\
    B(d,\infty,a^*)&=& \bigcup_{\substack{ p,q\in \N \\  d\leq q }}
\left[\frac{p}{q}-\frac1{q^{a^*}},\frac{p}{q}+\frac1{q^{a^*}}\right] 
  \end{eqnarray*}
 \end{definition}

 \begin{proof}[Proof of Theorem \ref{1}] 
  If $b=0$ the desired set $E$ is quite trivial:   for every $a$ greater than or equal to $2$, including
  $a=\infty$, there is a real number $x$ such that $a$ is the irrationality exponent of $x$.  Let
  $E=\{x\}$ and note that $E$ has Hausdorff dimension equal to~$0$ and the uniform measure on~$E$
  concentrates on a set of real numbers of irrationality exponent~$a$. 

   Assume that $b>0$.  Let $\vec{a}$ be the constant sequence with values $a$ and let $\vec{b}$ be a
   strictly increasing sequence of positive rational numbers with limit $b$.  Thus, $\vec{a}$ and
   $\vec{b}$ are appropriate.  The desired set $E$ will be an element of $\E(\vec{q},\vec{m},\vec{a}),$
   $\F(\vec{q},\vec{m},\vec{a})$ or $\G(\vec{q},\vec{m})$, for $\vec{m}$ and $\vec{q}$ constructed
   according to Lemma~\ref{7},~\ref{10} or~\ref{12}, depending on whether $a>2$ and $b\in(0,1/a)$, 
   or $a>2$ and $b\in[1/a,2/a)$, or $a=2$ and $b\in(0,1)$, respectively.  Since the first claim of the Theorem follows
   from Lemmas~\ref{7},~\ref{10} or~\ref{13}, we need only check the second claim.  We give a full
   account of the case $a>2$ and $0< b\leq1/a$.  We leave it to the reader to note that the same
   argument applies in the other cases.

  Suppose that $b\in(0,1/a)$ and let $f$ and $\vec{q}$ be the functions obtained in Lemma~\ref{6}.  Let
  $\vec{m}$ be the sequence defined by letting $m_1$ be sufficiently large in the sense of
  Lemma~\ref{6} and letting $m_k$ be the least $m$ such that $m$ is greater than $f(k,m_{k-1})$.
  Let $E^J(\vec{m},\vec{a})$ be the \Jarnik-fractal determined from $\vec{m}$ and $\vec{a}$.  Let
  $E^J_k(\vec{m},\vec{a})$ denote the set of intervals used in the definition of
  $E^J(\vec{m},\vec{a})$ at step $k$.  Let $g_k$ denote the minimum gap between two intervals in
  $E^J_k(\vec{m},\vec{a})$.
  We define the sequence $k_s$, $E_{k_s}$ and $d_s$ by recursion on $s$.  Let $k_1=1,$ let
  $E_1=E^J_1(\vec{m},\vec{a})=[0,1],$ and let $d_1$ be the least integer $d$ such that
  $\mu_J(B(d,\infty,a+1/2))$ is less than $\mu_J(E_1)/2^1=1/2$. 
  Now, suppose that $k_s$, $E_{k_s},$ and $d_s$ are defined so that $E_{k_s}$ is an initial segment of
  the levels of an element of $\E(\vec{q},\vec{m},\vec{a})$ and so that
  \[
  \mu_J\big(B\big(d_s,\infty,a+\frac{1}{2^s}\big)\cap E_{k_s}\big)< \frac{\mu_J(E_{k_s})}{2^s}.
  \]
  Let $d_{s+1}$ be the least $d$ such that
  \[
  \mu_J\big(B\big(d,\infty,a+\frac{1}{2^{s+1}}\big)\big)< \mu_J(E_{k_s})/(4\cdot 2^{s+1}).
  \]
  Let $c$ be the number of  intervals in $B(d_s,d_{s+1},a+1/2^s)$. 
  Let $k_{s+1}$ be the least $k$ such that the union of $2c$
  many of the intervals in $E^J_k(\vec{m},\vec{a})$ has measure less than
  $\mu_J(E_{k_s})/(4\cdot 2^s),$ and 
  let $C$ be the collection of intervals in $E^J_{k_{s+1}}$ which
  contain at least one endpoint of an interval in $B(d_s,d_{s+1},a+1/2^s)$. 
  
  Consider the set
  $F_{s+1}$ of extensions of the branch in $\E(\vec{q},\vec{m},\vec{a})$ with endpoint $E_{k_s}$
  to branches of length $k_{s+1}$.  Each element $F_{s+1}$ specifies a set of intervals
  $S$ at level $k_{s+1}.$ 
  Distinct elements in   $F_{s+1}$ have empty intersection and identical $\mu_J$-measure.  
  At most one-fourth of the elements $S$  in $F_{s+1}$ can be such that
  \[
  \mu_J\big(B\big(d_s,\infty,a+\frac{1}{2^s}\big)\cap S \big)\geq 4\cdot \frac{\mu_J(S)}{2^s}.
  \]
  Similarly, at most one-fourth of the $S$ in $F_{s+1}$ can be such that 
  \[
  \mu_J\big(B\big(d_{s+1},\infty,a+\frac{1}{2^{s+1}}\big)\cap S\big)\geq 
   4\cdot \frac{\mu_J(S)}{4\cdot 2^{s+1}} = 
  \frac{\mu_J(S)}{2^{s+1}},
  \]
  and at most one-fourth of the $S$ in $F_{s+1}$ can have 
  \[
    \mu_J(C\cap S)\geq 4\cdot \frac{\mu_J(S)}{4\cdot 2^s}= \frac{\mu_J(S)}{2^s}.
  \]
  Choose one element  $S$ of $F_{s+1}$ which does not belong to any of these fourths and define $E$ through its
  first $k_{s+1}$ levels so as to agree with that element.  Note that
  \[
  \mu_J\big(B\big(d_{s+1},\infty,a+\frac{1}{2^{s+1}}\big)\cap E_{k_{s+1}}\big)\leq  \frac{\mu_J(E_{k_{s+1}})}{2^{s+1}},
  \]
  which was the induction assumption on $E_{k_s}$.  Further note that at most $4/2^s$ of the
  intervals in $E_{k_s}$ can be contained in $B(d_s,d_{s+1},a+1/2^s)$ and at most $1/2^s$ of
  the intervals in~$E_{k_s}$ can contain an endpoint of an interval in $B(d_s,d_{s+1},a+1/2^s)$.
  Thus, the uniform measure $\mu$ on any element of $\E(\vec{q},\vec{m},\vec{a})$ extending the branch up to~$E_{k_{s+1}}$
  assigns $B(d_s,d_{s+1},a+1/2^s)$ a measure of less than $5/2^s$.

  Let $E$ be the set defined as above and for each $k$, let $E_k$ denote the level-$k$ of~$E$, and let
  $\mu$ denote the uniform measure on~$E$.  The first claim of the theorem, that the Hausdorff
  dimension of~$E$ is equal to~$b,$ follows from Lemma~\ref{7}.  For the second claim, every element
  of~$E^J$, and hence of~$E$, has irrationality exponent greater than or equal to~$a$.  So, it is
  sufficient to show that for every positive $\epsilon$,  there is an $s$ such
  that $\mu(B(d_s,\infty, a+\epsilon))<\epsilon$.  Let $s$ be sufficiently large so that
  $5/2^{s-1} < \epsilon$.   Then, since 
  \begin{align*}
    B(d_s,\infty, a+\epsilon)&=\bigcup_{t\geq s}B(d_t,d_{t+1},a+\epsilon)\\
    &\subseteq\bigcup_{t\geq s}B\big(d_t,d_{t+1},a+\frac{1}{2^s}\big)\\
    &\subseteq \bigcup_{t\geq s}B\big(d_t,d_{t+1},a+\frac{1}{2^t}\big),
  \end{align*}
  we have
  \begin{align*}
    \mu(B(d_s,\infty, a+\epsilon))&\leq \sum_{t\geq s}\mu\big(B\big(d_t,d_{t+1},a+\frac{1}{2^t}\big)\big)\\
    &\leq \sum_{t\geq s}\frac{5}{2^t}\\
    &\leq \frac{5}{2^{s-1}}.
  \end{align*}
  The desired result follows.
\end{proof}

\section{Effective Hausdorff Dimension and Irrationality Exponent}

This section is devoted to the proof of Theorem~\ref{2}.

\begin{proof}[Proof of Theorem~\ref{2}] As in our discussion of Theorem~\ref{1}, we will consider
  the case $a>2$ and $b\in[0,1/a]$ in detail.  We leave it to the reader to note that with
  straightforward modifications the argument applies to the other two cases.  For $b=0$, the
  argument reduces to finding a singleton set $E$, so the tree of subfractals $\E$ is just the tree
  of elements of $E^J$.  For $a>2$ and $b\in[1/a,2/a]$, $\F$ replaces $\E$.  For $a=2$ and
  $b\in(0,1]$, $\G$ replaces $\E$.

  Our proof follows the outline of the proof of Theorem~\ref{1}.  That is, we will produce a version
  of $E^J$ and $E$ so that the uniform measure $\mu$ on $E$ concentrates on real numbers with
  irrationality exponent~$a$ and so that every element of $E$ has effective Hausdorff dimension $b$.
  However, since we are not assuming that $a$ and $b$ are computable real numbers, we must work with
  rational approximations when ensuring the condition on effective Hausdorff dimension.  This change
  to add effectiveness to the representation of dimension leads us to weaken our conclusions
  elsewhere: we must settle for showing that $x$ has irrationality exponent greater than or equal
  to~$a$ by showing that for every $a^*<a$ there are infinitely many $p$ and $q$ such that
  $|p/q-x|<1/q^{a^*}$.  A similar modification of \Jarnik's construction appears
  in~\cite*{BugBecSla2014}, for different purpose.

  We will construct $\vec{a}$ and $\vec{b}$ so that $\vec{a}$ is non-decreasing with limit $a$ and
  so that $\vec{b}$ is strictly increasing with limit $b$.  Simultaneously, we will construct
  $\vec{m}$, $\vec{q}$ and $E$ in $\E(\vec{q},\vec{m},\vec{a})$ as in the proof of
  Theorem~\ref{1}.  In Theorem~\ref{1}, we began with $E^J(\vec{m},\vec{a})$ and
  $\E(\vec{q},\vec{m},\vec{a})$.  We defined a sequence of integers $d_k$ and an element of
  $\E(\vec{q},\vec{m},\vec{a})$.    At step $k$, we ensured that the set of real numbers with
  irrationality exponent greater than $a+1/2^k$ had small measure with respect to $\mu$.  It would
  have been sufficient to ensure the same fact for the set of numbers with irrationality exponent
  less than $a+\epsilon_k$, provided that $\epsilon_k$ was a non-increasing sequence with limit
  zero, which is how we will proceed now.  Thus, we will construct $(\alphabar_s:s\in\N)$ to be a 
  non-increasing sequence with limit $a$ to stand in for $(a+1/2^s:s\in\N)$. 

  Consider the problem of ensuring that for a non-negative integer $k$ and for every element $x$ of $E$,
  the sequence $\sigma$ consisting of the first $\log(m_{k})a_{k}$ digits in the base-2 expansion of
  $x$ has Kolmogorov complexity less than $\log(m_{k})a_{k} b$.  For this, it would be sufficient
  to exhibit an uniformly computable map taking binary sequences of length less than
  $\log(m_{k})a_{k} b$ onto the set of intervals in $E_k$.  For large enough $m_{k}$, we can use a
  binary sequence of length $\log(m_{k})(b-b_k)a_k$ to describe the first $k-1$ steps of the definition
  of $E^J$ and any initial conditions imposed at the beginning of step $k$.  It will then be
  sufficient to show that this information is enough to compute a map taking binary sequences of
  length less than $\log(m_{k})a_{k} b_k$ onto the set of intervals in $E_k$.

  We proceed by recursion on $s$. For each $s$, we specify three integers, $\ell_s,$ $k_s,$ and
  $d_s$.  We specify $d_s$ as we did in Theorem~\ref{1}.  We will specify rational numbers
  $\alpha_s$, $\overline\alpha_s$ and $\beta_s$ and use them to specify the values of $\vec{a}$, of
  up to $\ell_s$, specify the levels of $E^J$ up to $\ell_s$, which means that we also
  specify sequence of numbers $\vec{m}$ up to $\ell_s$, and we specify the values of $\vec{b}$ up to
  $\ell_s+2$.    This determines the values of $\vec{q}$ up to $\ell_s$.  Finally, we specify the first $k_s$ many
  levels of $E$.

  \subparagraph{Initialization of the recursion.}  Let $\beta_0$ be a positive rational number less
  than $b$.  Let $\epsilon_0$ be a rational number such that $\beta_0+\epsilon_0<b$.  Let
  $\alpha_0$ and $\overline{\alpha}_0$ be positive rational numbers such that
  $\alpha_0<a<\overline{\alpha}_0$ and $\overline\alpha_0-\alpha_0<1$.  
   Let $E_0=E^J_0=[0,1]$; let $m_1$ be sufficiently large in the
  sense of Lemma~\ref{6} for the constant sequence $\vec{\alpha}$ with value $\alpha_0$ and the
  sequence $\vec{\beta}$ with values $\beta_0+(1-1/2^n)\epsilon_0$, respectively; let
  $E^J_1=G_{m_1}(\alpha_0)$, the collection of intervals centered at rational numbers
  $p/m_1\in(0,1)$ with diameter $2/m_1^{\alpha_0}$; 
   let    $  g_1=\frac{q_1}{m_1}-\frac{2}{m_1^{\alpha}}$, which is the minimum distance between two intervals in $E_1^J$;
   and 
   let $d_0$ be the least integer $d$ such that
  $2/d^{\alpha_0}$ is less than $g_1$
  and such that
  \[
  \sum_{j\geq d}\frac{ j}{ (2 j)^{\overline{\alpha}_0}}< \frac{1}{2}.
  \]
  Since $\overline{\alpha}_0 > 2$, $d_0$ is well-defined.  
  Since we will define $E^J$ so that
  Lemma~\ref{6} applies, this sum is an upper bound on $\mu_J(B(d_0,\infty,\alpha_0))$.  So, this
  choice of $d_0$ ensures that $\mu_J(B(d_0,\infty,\alpha_0))$ is less than $1/2$.

  Let $\ell_0=1$; let the first two values of $\vec{a}$ be equal to those in $\vec{\alpha}$ and the
  first $3$ values of $\vec{b}$ be the same as those in $\vec{\beta}$.  
  Note that these choices
  determine $q_1$.  Let $k_0=0$ and let $E_1=E^J_1$.

  \subparagraph{Recursion: stage $s+1$.}  Now, suppose that our construction is defined through
  stage~$s$.  Following the proof of Theorem~\ref{1}, we may assume that we have ensured 
\[
  \mu_J(B(d_{s},\infty,\alpha_s)\cap E_{k_s})) \ < \ \frac{\mu_J(E_{k_s})}{2^{s}},
\] subject to our satisfying the hypotheses of~Lemma~\ref{6}.

  We ensure that stage $s+1$ of the construction of $E$ is uniformly computable from the
  construction up to step $s$ and parameters set during the initialization of stage $s+1$ by
  continuing the construction of $E^J$ recursively in these parameters until the definition of
  $E_{k_{s+1}}$ is evident.
  
  \subsubparagraph{Initialization of stage $s+1$.} Let $\alpha_{s+1},$ $\alphabar_{s+1}$ be rational
  numbers such that
  \[
  \alpha_{s}<\alpha_{s+1}<a<\alphabar_{s+1}\leq \alphabar_{s}
  \]
  and such that 
 \[
  |\alphabar_{s+1}-\alpha_{s+1}|<\frac{1}{s+1}.
  \]  
  Let $\beta_{s+1}$ be a rational number
  strictly between $\beta_{s}+\epsilon_s$ and $b$ such that 
  \[
  |b-\beta_{s+1}|<\frac{1}{s+1},
  \]
 and let
  $\epsilon_{s+1}$ be a rational number such that 
  \[
   \beta_{s+1}+\epsilon_{s+1} <b.
  \]
  Let $d_{s+1}$ be the least integer $d$ such that $2/d^{\alphabar_{s+1}}<g_{\ell_s}$ 
   such that  $2/d^{\alphabar_{s+1}}$  is less than the minimum gap length  
   $g_{\ell_s}= \frac{q_{\ell_s}}{m_{\ell_s}}-\frac{2}{m_{\ell_s}^{\alphabar_s}}$
  and
  \[
  \sum_{j\geq d} \frac{j}{(2 j)^{\alphabar_{s+1}}}<\frac{\mu_J(E_{k_{s}})}{4\cdot 2^{s+1}}.
  \]
  Since $2/d_{s+1}^{\alphabar_{s+1}}<g_{\ell_s}$ and we ensure that our construction of
  $E^J$ satisfies the hypotheses of Lemma~\ref{6}, this sum is an upper bound on
  $\mu_J(B_{d_{s+1}},\infty,\alphabar_{\ell_{s+1}})$.  Thus, we have ensured that
  \[
    \mu_J(B_{d_{s+1}},\infty,\alphabar_{\ell_{s+1}}) < \frac{\mu_J(E_{k_{s}})}{4\cdot 2^{s+1}},
  \]
  which is analogous to how we chose $d_{s+1}$ in the proof of Theorem~\ref{1}.

  Let $m_{\ell_s+1}$ be greater than $f(\ell_{s},m_{\ell_s})$  and  sufficiently large so  that a binary
  sequence of length $\log(m_{\ell_{s+1}})a(b-\beta_{s+1})$ can  describe these
  parameters together with the first $s$ steps of the construction.

  \subsubparagraph{Subrecursion: substage $\ell$.}  Proceed by recursion on substages $\ell$
  starting with initial value $\ell_{s}+1$.  Suppose that the termination condition for stage $s+1$
  was not realized during substage $\ell-1$.  Define $a_{\ell}$ to be equal to $\alpha_{s+1}$ and
  define $b_{\ell+2}$ to be equal $\beta_{s+1}+\epsilon_{s+1}(1-1/2^{\ell-\ell_{s}})$. 
  If~$\ell=\ell_s+1$, then value of $m_{\ell_s+1}$ was assigned in the previous
  paragraph.  Otherwise, let $m_\ell$ be larger than $f(\ell,m_{\ell-1})$.  Note that $f$ is defined in
  terms the values of $\vec{q}$ and $\vec{a}$ up to $\ell$ and that the values of $\vec{q}$ are
  defined in terms of $\vec{a}$ and $\vec{m}$ up to $\ell$ and $\vec{b}$ up to $\ell+2$, all of
  which have been determined before the evaluation of~$f$.

\subsubparagraph{Termination of the subrecursion.}  Note that $\mu_J(E_{k_s})$
  is determined at the end of step $s$, since it is equal to the number of level-$k_s$ intervals in
  $E_{k_s}$ 
  divided by the number of level-$(k_s)$ intervals in $E^J$.
  We say that step $\ell$ satisfies the termination condition for stage $s+1$ when
  there is a $k$ between $\ell_{s}$ and $\ell$ such that there is an $S$ in  the level $k$ of $\E(\vec{q}, \vec{m}, a)$
  satisfying the following conditions. 
  \begin{itemize}
  \item There is a $d^*$ in $(d_{s},d_s+\ell)$ such that the $\mu_J$-measure of the set of intervals in
    $E_\ell^J$ which have non-empty intersection with $B(d_{s},d^*,\alphabar_{s})\cap S$ is
    less than \[
    4\cdot\frac{\mu_J(S)}{2^{s}}- \sum_{j\geq d^*} \frac{j}{ (2 j)^{\alphabar_{s+1}}}.
   \]

  \item There is a $d^*$ in $(d_{s+1},d_{s+1}+\ell)$ such that the $\mu_J$-measure of the set of intervals in
    $E_\ell^J$ which have non-empty intersection with $B(d_{s+1},d^*,\alphabar_{s+1})$ is
    less than 
   \[
    4\cdot\frac{\mu_J(S)}{2^{s+1}}- \sum_{j\geq d^*} \frac{j}{ (2 j)^{\alphabar_{s+1}}}.
    \]

  \item The $\mu_J$ measure of the union of the set of intervals in $S$ 
   which contain at least one endpoint of an interval in $B(d_{s},d_{s+1},\alphabar_{s+1})$ is less than
    \[
    \frac{\mu_J(S)}{  2^{s+1}}.
    \]
  \end{itemize}

  If the termination condition is realized for $\ell$, then we set $\ell_{s+1}$ equal to~$\ell$, and
  set $k_{s+1}$ and $E_{k_{s+1}}$ to have values equal to those in the least pair $k$ and $S$
  which satisfy the termination condition.  

  \paragraph{Verification.}  We first note that by inspection of our construction, it satisfies the hypotheses
  of Lemma~\ref{6}.  
  Next, we observe that for every stage the subrecursion for that stage eventually realizes its
  termination condition.  For the sake of a contradiction, suppose that there is a stage in the main
  recursion of the construction whose subrecursion never terminates.  Then $E^J$ is defined using
  an eventually constant sequence $\vec{a}$ and an increasing sequence $\vec{b}$.  By the same
  reasoning as in the proof of Theorem~\ref{1}: first, there must be a $k$ and an $S$ such that
  the three sets in question have sufficiently small relative measure in $S$; and second, for
  any such $k$ and $S$, for any $\delta>0$, there is an $\ell$ such that the measures of the
  intersections of those three sets with $S$ is approximated to within $\delta$ by the measure
  of their smallest covers using intervals in $E_\ell^J$.  But, then the termination condition would
  apply, a contradiction.  Thus, both $E$ and $E^J$ are well-defined, as are their uniform measures
  $\mu$ and~$\mu_J$.

  By the same argument as in the proof of Theorem~\ref{1}, $\mu$-almost every real number~$x$ has
  exponent of irrationality equal to~$a$.  By Lemma~\ref{3}, the Mass Distribution Principle applied
  to~$\mu$, if $B$ is a subset of the real numbers and the Hausdorff dimension of~$B$ is less than
  $b$, then $\mu(B)=0$.  So, $\mu$-almost every~$x$ has effective Hausdorff dimension greater than
  or equal to~$b$.

  In order to conclude that every element of $E$ has effective Hausdorff dimension less than or
  equal to $b$, it remains to show that for every $x\in E$ and for every $n\in\N$, there is an $m>n$
  such that the Kolmogorov complexity of the first $m$ digits in the binary expansion of $x$ is less
  than or equal to~$b\cdot m$.  Let such $x$ and $n$ be given, and consider a stage $s+1$ such that
  $m_{s+1}$ is greater than $n$.  Then, the first $s$ many steps of the construction together with
  the values of $a_{\ell_{s+1}},$ $b_{\ell_{s+1}}$ and $m_{s+1}$ can be effectively described by a
  sequence of length $m_{s+1}$.  The result of stage $s+1$ of the construction is effectively
  determined from these parameters; in particular, $E_{\ell_{s+1}}$ is effectively defined from
  these parameters.  By definition, there are at most $m_{\ell_{s+1}}/q_{\ell_{s+1}}$ many intervals
  in $E_{\ell_{s+1}}$.  By Lemma~\ref{6},
  \[
  \frac{1}{q_{\ell_{s+1}}}<
   m_{\ell_{s+1}}^{a_{\ell_{s+1}}(b_{\ell_{s+1}}+(1-2^{\ell_{s+1}-\ell_s})\epsilon_{s+1})-1}<
   m_{\ell_{s+1}}^{a_{\ell_{s+1}}(b_{\ell_{s+1}}+\epsilon_{s+1})-1},
    \]
    and so
    \[
     \frac{m_{\ell_{s+1}}}{q_{\ell_{s+1}}}<
     m_{\ell_{s+1}}^{a_{\ell_{s+1}}(b_{\ell_{s+1}}+\epsilon_{s+1})}=
     2^{\log(m_{\ell_{s+1}})a_{\ell_{s+1}}(b_{\ell_{s+1}}+\epsilon_{s+1})}.
     \]
    Thus, we can read off a surjection from the set of binary sequences of length
    \[\log(m_{\ell_{s+1}})a_{\ell_{s+1}}(b_{\ell_{s+1}}+\epsilon_{s+1})\] 
     to the set of intervals in
    $E_{\ell_{s+1}}$.  Each interval $I$ in $E_{\ell_{s+1}}$ has length
    $2/m_{\ell_{s+1}}^{a_{\ell_{s+1}}}$.  Computably, each such interval $I$ restricts the first
    $\log(m_{\ell_{s+1}})a_{\ell_{s+1}}$ digits in the base-2 expansions of its elements to at most
    two possibilities.  Thus, for each $x$ in $E$, the sequence of the first
    $\log(m_{\ell_{s+1}})a_{\ell_{s+1}}$ digits in its base-2 expansion can be uniformly computably
    described using the information encoded by three sequences, one of length
    $\log(m_{\ell_{s+1}}) a_{\ell_{s+1}}(b-(b_{\ell_{s+1}}+\epsilon_{s+1}))$ to describe the
    construction up to stage $s$, one of length
    $ \log(m_{\ell_{s+1}}) a_{\ell_{s+1}}(b_{\ell_{s+1}}+\epsilon_{s+1})$ to describe the interval
    within $E_{\ell_{s+1}}$ that contains $x$, and one of length $1$ to describe which of the two
    possibilities within that interval apply to $x$.  By the choice of $m_{\ell_{s+1}}$, this sum
    is less than or equal to $\log(m_{\ell_{s+1}})a_{\ell_{s+1}} b$, as required.
\end{proof}

\bibliography{eie}
\bigskip
\bigskip

\noindent
\begin{minipage}{\textwidth}
\small
Ver\'onica Becher
\\
Departamento de Computaci\'on,
Facultad de Ciencias Exactas y Naturales, Universidad de Buenos Aires.
Pabell\'on I, Ciudad Universitaria, (1428) Buenos Aires, Argentina.
Also, CONICET Argentina.
\\
Email: \texttt{vbecher@dc.uba.ar}
\medskip

Jan Reimann
\\
Department of Mathematics Penn State University. 
318B McAllister, University Park, PA 16802
\\
 Email: \texttt{jan.reimann@psu.edu}
\medskip

Theodore A. Slaman
\\
The University of California, Berkeley,
Department of Mathematics.
719 Evans Hall \#3840,
Berkeley, CA 94720-3840 USA
\\
Email: \texttt{slaman@math.berkeley.edu}
\end{minipage}

\end{document}